\setlist[enumerate]{label=\emph{(\roman*)}}
\newtheorem{theorem}{Theorem}[section]
\newtheorem{lemma}[theorem]{Lemma}
\newtheorem{proposition}[theorem]{Proposition}
\theoremstyle{definition}
\newtheorem{definition}[theorem]{Definition}
\newtheorem{remark}[theorem]{Remark}
\numberwithin{equation}{section}
\newcommand{\R}{\mathbb{R}}
\def \C {{\mathbb{C}}}
\def \d {{\rm{d}}}
\def \pt{\partial_{t}}
\begin{document}
	
	\title[Uniform-in-mass global solutions to 4D DKG systems]{Uniform-in-mass global existence for 4D Dirac-Klein-Gordon equations\footnote{This preprint has not undergone peer review (when 
			applicable) or any post-submission improvements or corrections. The Version of Record of this article is published in Analysis and Mathematical Physics, and is available online at https://doi.org/10.1007/s13324-024-00945-8.}}

	
	\author[1]{\fnm{Jingya} \sur{Zhao} \orcidlink{0009-0001-1090-9651}
	}\email{12231282@mail.sustech.edu.cn}
	
	\affil[1]{\orgdiv{Department of Mathematics}, \orgname{Southern University of Science and Technology}, \orgaddress{\city{Shenzhen}, \postcode{518055}, \country{China}}}

	
	\abstract{We are interested in four-dimensional Dirac-Klein-Gordon equations, a fundamental model in particle physics. The main goal of this paper is to establish global existence of solutions to the coupled system and to explore their long-time behavior. The results are valid uniformly for mass parameters varying in the interval $[0,1]$. }

	\keywords{Dirac-Klein-Gordon system, Global existence, Long-time behavior, Vector field method}
	
	
	\pacs[Mathematics Subject Classification]{35L52, 35L71}

	\maketitle
	
	\section{Introduction}

	\subsection{Model problem}
	
	The Dirac-Klein-Gordon system is a fundamental model in particle physics, which describes a scalar Klein-Gordon field and a Dirac field through Yukawa interactions. In this paper, we study the four-dimensional Dirac-Klein-Gordon system, which is expressed as
	\begin{equation}\label{equ:DKG}
		\left\{\begin{aligned}
			-i\gamma^{\mu}\partial_{\mu}\psi+M\psi&=vF\psi,\quad \quad \ \ (t,x)\in [t_0,\infty)\times \R^{4},\\
			-\Box v+m^2v&=\psi^{*}H\psi,\quad\quad (t,x)\in [t_0,\infty)\times \R^{4},
		\end{aligned}\right.
	\end{equation}
	where $M,m\in[0,1]$ denote the masses of the Dirac field $\psi$ and the Klein-Gordon field $v$, respectively.

	In the above, 
	$\left\{\gamma^{0},\gamma^{1},\gamma^{2},\gamma^{3},\gamma^{4}\right\}$ are Dirac matrices satisfying the identities
	\begin{equation}\label{equ:gamma}
		\left\{\gamma^{\mu},\gamma^{\nu} \right\}:=\gamma^\mu \gamma^\nu + \gamma^\nu \gamma^\mu = -2\eta_{\mu\nu}I_{4},\quad 
		(\gamma^{\mu})^{*}=-\eta_{\mu\nu}\gamma^{\nu},
	\end{equation}
	in which $A^{*}$ is the conjugate transpose of a matrix $A$, $I_{4}$ is the $4\times4$ identity matrix, and $\eta=\rm{diag}(-1,1,1,1,1)$ denotes the Minkowski metric in $\R^{1+4}$. 
	There exist matrices of size $4\times4$ satisfying~(\ref{equ:gamma}). For example, the Dirac matrices can be chosen as
	\begin{equation*}
		\gamma^{0}=\begin{pmatrix}
			1 & 0&0&0\\
			0&1&0&0\\
			0&0&-1&0\\
			0&0&0& -1
		\end{pmatrix}, \quad 
		\gamma^{1}=\begin{pmatrix}
			0 & 0&0&1\\
			0&0&1&0\\
			0&-1&0&0\\
			-1 & 0&0&0
		\end{pmatrix},\quad
		\gamma^{2}=\begin{pmatrix}
			0&0&0 & -i\\
			0&0&i&0\\
			0&i&0&0\\
			-i & 0&0&0
		\end{pmatrix},
	\end{equation*}
	\begin{equation*}
		\gamma^{3}=\begin{pmatrix}
			0&0&1 & 0\\
			0&0&0&-1\\
			-1&0&0&0\\
			0& 1&0&0
		\end{pmatrix}\quad  \mbox{and}\quad
		\gamma^{4}=-\gamma^{0}\gamma^{1}\gamma^{2}\gamma^{3}=\begin{pmatrix}
			0&0&i & 0\\
			0&0&0&i\\
			i&0&0&0\\
			0 & i&0&0
		\end{pmatrix}.
	\end{equation*}
	The matrices $F,H$ are of size $4\times 4$, and following the early works~\cite{Bache,DongWyatt}, they are assumed to satisfy
	\begin{equation}\label{FH}
		(\gamma^0F)^*=\gamma^0F, \quad H^*=H.
	\end{equation}
	In (\ref{equ:DKG}), $\Box=\eta^{\mu\nu}\partial_\mu\partial_\nu=-\partial_t\partial_t+\Delta$ is the D'Alembert operator.
	The Greek indices $\mu, \nu, \cdots \in\left\{0,1,2,3,4\right\}$, and the Einstein summation convention for repeated upper and lower indices is adopted.
	
	The initial data are prescribed on the slice $t=t_0$ (we will always take $t_0=0$)
	\begin{equation}\label{equ:initial}
		\left(\psi,v,\pt v\right)|_{t=t_0}=\left(\psi_{0},v_{0},v_{1}\right),
	\end{equation}
	in which $(\psi_{0},v_{0},v_{1}):\mathbb{R}^4 \to \mathbb{C}^4\times \R\times \R$.
	
	We are interested in the Cauchy problem (\ref{equ:DKG})-(\ref{equ:initial}) under the conditions~(\ref{FH}).
	
	\subsection{Main result}
	
	The main goal of this paper is to establish global existence of solutions to the system (\ref{equ:DKG})-(\ref{equ:initial}), which is uniform in the mass parameters $M,m\in [0,1]$. Our main result is the following.
	\begin{theorem}\label{thm:main}
		Let $N\in \mathbb{N}$ with $N\geq7$. There exists an $\varepsilon_0>0$ independent of mass parameters $M,m\in[0,1]$ such that for all initial data $\left(\psi_{0},v_{0},v_{1}\right)$ 
		satisfying the smallness condition
		\begin{equation}\label{est:smallness}
			\begin{aligned}
				\sum_{0\leq k\leq N}\|\langle |x|\rangle^N\nabla^k\psi_{0}\|_{L^2_x}
				+\sum_{0\leq k\leq N+1}\|\langle |x|\rangle^N\nabla^kv_0\|_{L^1_x}+\sum_{0\leq k\leq N+1}\|\langle |x|\rangle^N\nabla^kv_0\|_{L^2_x}\\
				+\sum_{0\leq k\leq N}\|\langle |x|\rangle^N\nabla^kv_1\|_{L^1_x}+\sum_{0\leq k\leq N}\|\langle |x|\rangle^N\nabla^kv_1\|_{L^2_x}\le \varepsilon<\varepsilon_0,
			\end{aligned}
		\end{equation}
		the Cauchy problem~\eqref{equ:DKG}-\eqref{equ:initial} admits a global-in-time solution $(\psi,v)$, which enjoys the following time decay estimates
		\begin{equation}\label{est:thmpoint}
			\left|\psi(t,x)\right|\lesssim \varepsilon \langle t\rangle^{-\frac{3}{2}},\quad 
			\left|v(t,x)\right|\lesssim \frac{\varepsilon}{1+t^{\frac{5}{4}}+mt^{\frac{3}{2}}}.
		\end{equation}
	\end{theorem}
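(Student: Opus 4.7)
I would approach Theorem~\ref{thm:main} by a standard vector-field-plus-bootstrap scheme, with careful attention paid to the requirement that every estimate be uniform in $M,m\in[0,1]$. The overall plan is to propagate high-order weighted $L^2$ energy bounds for $\psi$ and $v$, turn these into pointwise decay via Klainerman–Sobolev type inequalities, and then use bilinear/null structure considerations to show that the nonlinearities $vF\psi$ and $\psi^{*}H\psi$ are integrable sources in the energy estimates. Local well-posedness being classical, only the global-in-time bootstrap is nontrivial.

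First I would set up admissible vector fields. For $v$ I would use translations $\partial_\mu$, Lorentz boosts $L_a=t\partial_a+x_a\partial_t$, and rotations $\Omega_{ab}=x_a\partial_b-x_b\partial_a$, but \emph{not} the scaling field $S$, since it does not commute with the mass term (this is what forces $m$-uniform estimates rather than truly mass-free ones). For $\psi$ I would use the Dirac-adapted versions $\widehat L_a=L_a-\tfrac12\gamma^0\gamma^a$, $\widehat\Omega_{ab}=\Omega_{ab}-\tfrac12\gamma^a\gamma^b$, which commute with $-i\gamma^\mu\partial_\mu+M$ uniformly in $M$. Let $\Gamma$ denote the family of all such fields and introduce the energy functionals
\begin{equation*}
\mathcal{E}_N(t)=\sum_{|I|\le N}\bigl(\|\Gamma^I\psi(t)\|_{L^2_x}^2+\|\partial\Gamma^I v(t)\|_{L^2_x}^2+m^2\|\Gamma^I v(t)\|_{L^2_x}^2\bigr),
\end{equation*}
along with an auxiliary conformal-type quantity controlling $\|\langle|x|\rangle\, \Gamma^I\psi\|_{L^2}$ needed for the $\langle t\rangle^{-3/2}$ Dirac decay. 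The bootstrap assumption I would make is $\mathcal{E}_N(t)^{1/2}\le C_1\varepsilon\langle t\rangle^{\delta}$ for a small $\delta>0$ and a slightly reduced $\mathcal{E}_{N-2}(t)^{1/2}\le C_1\varepsilon$.

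Next I would convert these $L^2$ bounds into the pointwise decays in~\eqref{est:thmpoint}. For $\psi$, either treating $\psi$ directly as a Dirac spinor or squaring to obtain $(\Box-M^2)\psi=\cdots$, the 4D Klainerman–Sobolev inequality yields $|\Gamma^J\psi|\lesssim \varepsilon\langle t\rangle^{-3/2}$ for $|J|\le N-3$, uniformly in $M$ (the $M^2\|\psi\|_{L^2}^2$ contribution is harmless after integration by parts). For $v$, I would use the mass-uniform decay estimate for 4D Klein–Gordon derived via a stationary-phase or Hankel-type representation applied to the Duhamel formula: writing $v=v_{\rm lin}+v_{\rm nl}$, the free part satisfies $|v_{\rm lin}|\lesssim \varepsilon(1+t^{5/4}+mt^{3/2})^{-1}$ from the weighted $L^1\cap L^2$ data hypothesis, while the Duhamel contribution is controlled by integrating the bilinear source $\psi^{*}H\psi$, whose pointwise bound $\lesssim \varepsilon^2\langle t\rangle^{-3}$ gives a strongly summable contribution in 4D.

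Finally, I would close the bootstrap by commuting $\Gamma^I$ through~\eqref{equ:DKG} and running mass-uniform energy identities. The source term $\Gamma^I(vF\psi)$ is expanded via Leibniz into a product of two factors, one of which can always be taken below the decay threshold and estimated in $L^\infty$ using the decay just proven. Using $|v|\cdot|\psi|\lesssim\varepsilon^2 \langle t\rangle^{-5/4-3/2}$ one gets a time-integrable contribution, and similarly $|\psi|^2\lesssim \varepsilon^2\langle t\rangle^{-3}$ is integrable on the Klein–Gordon side; combined with Grönwall this improves $C_1\varepsilon$ to $C_1\varepsilon/2$ for $\varepsilon$ small enough. \emph{The main obstacle} I anticipate is the mass-uniform control of $v$: in the regime $mt\ll 1$ the Klein–Gordon propagator behaves like the wave propagator and only yields decay $\langle t\rangle^{-5/4}$ rather than $\langle t\rangle^{-3/2}$, which is exactly why the $L^1$ weighted norms on $v_0,v_1$ appear in~\eqref{est:smallness} and why the hybrid denominator $1+t^{5/4}+mt^{3/2}$ shows up in~\eqref{est:thmpoint}. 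Producing a single decay estimate that interpolates correctly between the wave ($m=0$) and massive ($m=1$) regimes, and verifying that $\langle t\rangle^{-5/4}$ is still sufficient to sustain the $L^2$ bootstrap for $v$ in the presence of the quadratic Dirac source, is the delicate technical step on which the whole argument turns.
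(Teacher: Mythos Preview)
Your vector-field setup (translations, boosts, rotations, Dirac-modified versions, no scaling) matches the paper exactly, but the paper does \emph{not} run a bootstrap. It uses a Banach fixed point argument, and this is not a stylistic choice: the Klainerman--Sobolev inequality available without the scaling field (Lemma~\ref{le:GlobalSobolev}) reads
\[
|f(t,x)|\lesssim \langle t\rangle^{-3/2}\sup_{0\le s\le 2t}\sum_{|I|\le 4}\|\Gamma^I f(s)\|_{L^2_x},
\]
so decay at time $t$ requires $L^2$ control out to time $2t$. In a bootstrap on $[0,T]$ this only yields decay on $[0,T/2]$, and you cannot close the energy estimate on $(T/2,T]$; the paper explicitly flags this circularity (Remark~\ref{remarkSob}). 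The fix is to work on all of $[0,\infty)$ from the outset: define a complete metric space $X$ of pairs $(\phi,u)$ with $\sup_{t\ge 0,\,|I|\le N}\|\widehat\Gamma^I\phi\|_{L^2}+\sup_{t\ge 0,\,|I|\le N}\langle t\rangle^{-1/4}\|\Gamma^I u\|_{L^2}\le C_1\varepsilon$, and show the Picard map $T$ of Definition~\ref{mappingT} is a contraction on $X$. Your bootstrap scheme, as written, has no mechanism to handle the $2t$ issue.

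The second gap is the uniform-in-$m$ control of $\|\Gamma^I v\|_{L^2}$ itself. Your energy $\mathcal E_N$ only contains $m^2\|\Gamma^I v\|_{L^2}^2$, which is useless as $m\to 0$, and the conformal quantity you mention does not supply it either. The paper's key device is Lemma~\ref{lem:solKG}(i): splitting the Fourier multiplier $\sin(\xi_m(t-s))/\xi_m$ at $|\xi|=s^{-\delta_1}$ gives, uniformly for $m\in[0,1)$,
\[
\|u\|_{L^2_x}\lesssim \|u_0\|_{L^2}+\|u_1\|_{L^1\cap L^2}+\int_0^t\bigl(s^{\delta_1}\|G(s)\|_{L^2}+s^{-\delta_1}\|G(s)\|_{L^1}\bigr)\d s.
\]
With $\delta_1=\tfrac34$ and $G=\psi^*H\psi$ (for which $\|G\|_{L^2}\lesssim\varepsilon^2\langle s\rangle^{-3/2}$ and $\|G\|_{L^1}\lesssim\varepsilon^2$) this produces $\|\Gamma^I v\|_{L^2}\lesssim \varepsilon\langle t\rangle^{1/4}$, whence the $\langle t\rangle^{-5/4}$ decay for $v$ via~\eqref{est:GloSob}. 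This $L^1$--$L^2$ splitting is precisely why the weighted $L^1$ norms of $v_0,v_1$ appear in~\eqref{est:smallness} and why the $\langle t\rangle^{-1/4}$ weight sits in the norm on $X$; your stationary-phase route to pointwise decay does not by itself furnish the $L^2$ bound on $\Gamma^I v$ that the Dirac energy estimate needs.
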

	
	\begin{remark}
		In Theorem~\ref{thm:main}, we establish a uniform-in-mass solution globally to the system~(\ref{equ:DKG}). However, we need to point out that the decay rates of Dirac field $\psi$ and Klein-Gordon field $v$ in (\ref{est:thmpoint}) are not very fast, which is caused by the limitations of the method. That is why we work in four space dimensions. In the future, we hope to improve our method to treat the system~(\ref{equ:DKG}) in three space dimensions and to get unified optimal decay estimates.
	\end{remark}
	
	\begin{remark}
		If $M,m\in\{0,1\}$ which denote the masses of the Dirac field $\psi$ and the Klein-Gordon field $v$, respectively, then we can obtain improved pointwise decay estimates for solutions.
	\end{remark}
	\begin{remark}
		Thanks to the faster decay rates of solutions in higher than 5 dimensions compared to the 4D case, we expect to obtain similar results  to Theorem~\ref{thm:main} by the same way.
	\end{remark}
	
	\begin{remark}
		When $F=I_4, H=\gamma^0$, the system (\ref{equ:DKG}) represents the most well-known case. For this case, there exist extensive mathematical studies; one can refer to \cite{Bache,Boura}.
	\end{remark}
	
	\begin{remark}
		According to~\cite[Theorem 4.4]{Dong-Li22} and~\cite{DLMY,DongWyatt}, the system (\ref{equ:DKG}) enjoys linear scattering.
	\end{remark}

	\subsection{References}
	
	The Dirac-Klein-Gordon system is a classical and important model in particle physics, and it also has high research interest in mathematics. Over the past few decades, there is a great deal of literature on the Dirac-Klein-Gordon system. The global existence and the long-time behavior for the Cauchy problem are the focus of the present article.
	
	\smallskip
	For the three-dimensional case, Chadam-Glassey~\cite{ChGl74} proved global existence of the solution to the Cauchy problem for the Dirac-Klein-Gordon system with some suitably chosen initial data. After that, Choquet-Bruhat~\cite{Ch-Br81} established global existence of the 3D massless Dirac-Klein-Gordon system  by the conformal method. Later on, Bachelot~\cite{Bache} showed global existence for a massive Dirac equation coupled with a massless Klein-Gordon equation in the three-dimensional case by the energy method; see also the recent work in~\cite{DLY}. Some results of low regularity are also mentioned here. D'Ancona-Foschi-Selberg~\cite{DFS-07} proved almost optimal local well-posedness for the 3D Dirac-Klein-Gordon system with the low regular initial data. Later on, Wang~\cite{WangX} and Bejenaru-Herr~\cite{Bejenaru-Herr} independently established global existence for 3D massive Dirac-Klein-Gordon equations with the low regular initial data.

	\smallskip
	
	For the two-dimensional case, Bournaveas \cite{Boura} established local-in-time low regularity solutions to Dirac-Klein-Gordon equations. Later on, Gr\"{u}nrock-Pecher \cite{GP10} first proved global existence of 2D Dirac-Klein-Gordon equations for low regularity (and also high regularity) initial data. Recently, Dong-Wyatt \cite{DongWyatt} studied the global behavior for some 2D coupled Dirac-Klein-Gordon equations with a massless Dirac field and a massive Klein-Gordon field for compactly-supported initial data; see also the work \cite{DLMY} by Dong-Li-Ma-Yuan for non-compactly supported initial data.
	
	\smallskip
	Relevant results for the 1-dimensional Dirac-Klein-Gordon system can be found in~\cite{Boura00,ChGl74}.
	
	\smallskip
	Last but not least, we very briefly review some results relevant to our study. Georgiev~\cite{Geor91} established global solutions to Maxwell-Dirac equations. Later on, Tsutsumi~\cite{Tsutsumi} considered global solutions to 3D Dirac-Proca equations. Recently, Dong-LeFloch-Wyatt~\cite{DLW} provided a uniform-in-mass decay result for a 3D Dirac equation. For other related results, one is led to~\cite{Dong,Ionescu-P-EKG,Katayama,Kata-Kubo,Klainerman-WY,ZhangQ23,ZhangQ}.

	\subsection{Major difficulties}
	
	We are interested in four-dimensional Dirac-Klein-Gordon equations when the mass parameters vary in the interval $[0,1]$. The main goal of this paper is to establish uniform-in-mass global existence and to explore the global behavior of the solution. The first thing we need to notice is that the scaling vector field is not compatible with massive Dirac equations and Klein-Gordon equations; for more details, one can see (\ref{commutators}) and (\ref{commutatorsL0}). Therefore, we can not use the scaling vector field when applying the vector field method. Due to the absence of the scaling vector field, we can only use the global Sobolev inequality (inequality~(\ref{est:GloSob})) that does not involve the scaling vector field. To apply inequality~(\ref{est:GloSob}), we need to control the solution in $L^2.$ However, it is hard to get good estimates for solutions which are independent of the mass parameters.
	What we care about is how to control the $L^2$ norm $\|\Gamma^If\|_{L^2_x}$ till time $2t$, which allows us to get the pointwise decay rate of $f$ at time $t$ when applying this inequality and this is the second difficulty we face (see Remark~\ref{remarkSob} for more details). Therefore, what we need to consider is to find a uniform way to handle the system with different mass parameters.

	\subsection{Key points of the proof}
	
	In this subsection, we present the key ideas of the proof. Due to the problems mentioned above, we apply the Banach fixed point theorem to establish global existence of the system (\ref{equ:DKG})-(\ref{equ:initial}). 
	Specifically, due to the utilization of Klainerman-Sobolev inequality (\ref{est:GloSob}), we rely on the fixed point iteration method rather than a bootstrap argument to prove global existence (see Remark~\ref{remarkSob}). From the inequality (\ref{est:GloSob}), we have to control the $L^2$ norm in time from 0 to $2t$ to get the decay rate at time $t$. In the standard Klainerman-Sobolev inequality, we only need to control the $L^2$ norm at time $t$.
	
	Additionally, we expect to find some new uniform $L^2$ estimates of undifferentiated solutions to Dirac equations, wave equations and Klein-Gordon equations with different mass parameters; see (\ref{est:uL2}) for more details. Combining with inequalities (\ref{est:phiL2}) and (\ref{est:uL22}), we can treat the system (\ref{equ:DKG}) which has different mass parameters. Then we can get some $L^\infty$ estimates by using Klainerman-Sobolev inequality (\ref{est:GloSob}) which yields a fast $\langle t\rangle^{-\frac{3}{2}}$ decay rate. 
	
	Finally, to apply the iteration method, we need to construct a suitable normed function space and a contraction mapping. Then, by the Banach fixed point theorem and those estimates, we can complete the proof of Theorem~\ref{thm:main}. See Section~\ref{existence} for more details.

	\subsection{Organization of the paper}
	The paper is organized as follows. First, in Section \ref{Se:Pre}, we introduce the notation and basic estimates for later use. Then we give some $L^2$ estimates for solutions to 4D linear Dirac equations and 4D linear Klein-Gordon equations in Section \ref{estimates}. Finally, we establish global existence of the system (\ref{equ:DKG})-\eqref{equ:initial} in Section \ref{existence}.

	\section{Preliminaries}\label{Se:Pre}
	
	\subsection{Notation and conventions}
	Our problem is in (1+4) dimensional spacetime $\R^{1+4}$. We denote a spacetime point in $\R^{1+4}$ by 
	$(t,x)=(x_{0},x_{1},x_{2},x_{3},x_{4})$, and its spatial radius by $r=\sqrt{x_{1}^{2}+x_{2}^{2}+x_{3}^{2}+x_{4}^{2}}$. 
	Greek letters $\left\{\alpha,\beta,\dots\right\}$ are used to denote spacetime indices and Roman letters $\left\{a,b,\dots\right\}$ are used to denote spatial indices. We use Japanese bracket to denote $\langle \rho\rangle=(1+\rho^{2})^{\frac{1}{2}}$ for $\rho\in \R$.
	We write $A \lesssim B$ to indicate $A\leq C_0 B$ with $C_0$ a universal constant. Unless otherwise specified we will always adopt the Einstein summation convention for repeated upper and lower indices.
	
	Following Klainerman~\cite{Klainerman86}, we introduce the following vector fields
	\begin{enumerate}
		\item [(1)] Translations: $\partial_{\alpha}:=\partial_{x_{\alpha}}$, for $\alpha=0,1,2,3,4$.
		\item [(2)] Rotations: $\Omega_{ab}:=x_{a}\partial_{b}-x_{b}\partial_{a}$, for $1\leq a<b\leq 4$.
		\item [(3)] Scaling vector field: $L_0:=t\pt +x^{a}\partial_{a}$.
		\item [(4)] Lorentz boosts: $L_{a}:=t\partial_{a}+x_{a}\partial_{t}$, for $a=1,2,3,4$.
	\end{enumerate}
	Following Bachelot~\cite{Bache}, we also introduce the modified rotations and Lorentz boosts
	\begin{equation*}
		\widehat{\Omega}_{ab}=\Omega_{ab}-\frac{1}{2}\gamma^{a}\gamma^{b}\quad \mbox{and}\quad 
		\widehat{L}_{a}=L_{a}-\frac{1}{2}\gamma^{0}\gamma^{a}.
	\end{equation*}
	We denote
	\begin{equation*}
		\begin{aligned}
			\widehat{\Omega}=(\widehat{\Omega}_{12},\widehat{\Omega}_{13},\widehat{{\Omega}}_{14},\widehat{\Omega}_{23},\widehat{{\Omega}}_{24},\widehat{{\Omega}}_{34}),\quad
			\widehat{L}=(\widehat{L}_{1},\widehat{L}_{2},\widehat{L}_{3},\widehat{L}_{4}).
		\end{aligned}
	\end{equation*}
	We define two ordered sets of vector fields,
	\begin{equation*}
		\begin{aligned}
			&\Gamma=\left(\Gamma_{1},\dots,\Gamma_{15}\right)=\left(\partial,\Omega,L\right),\quad \ \ \ \
			\widehat{\Gamma}=\left(\widehat{\Gamma}_{1},\dots,\widehat{\Gamma}_{15}\right)=\left(\partial,\widehat{\Omega},\widehat{L}\right),\\
		\end{aligned}
	\end{equation*}
	where 
	\begin{equation*}
		\begin{aligned}
			\partial&=(\partial_0,\partial_1,\partial_2,\partial_3,\partial_4)=(\partial_t,\nabla),\\ \Omega&=(\Omega_{12},\Omega_{13},\Omega_{14},\Omega_{23},\Omega_{24},\Omega_{34}), \\
			L&=(L_1,L_2,L_3,L_4).
		\end{aligned}
	\end{equation*}
	Moreover, for all multi-index $I=(I_{1},\dots,I_{15})\in \mathbb{N}^{15}$, we denote
	\begin{equation*}
		\Gamma^{I}=\prod_{k=1}^{15}\Gamma_{k}^{I_{k}},\quad 
		\widehat{\Gamma}^{I}=\prod_{k=1}^{15}\widehat{\Gamma}_{k}^{I_{k}}.
	\end{equation*}
	Also, for any $\R$-valued or $\C^{4}$-valued function $f$, we have 
	\begin{equation*}
		\begin{aligned}
			\left|\Gamma f\right|=\left(\sum_{k=1}^{15}\left|\Gamma_{k}f\right|^{2}\right)^{\frac{1}{2}}\quad \mbox{and}\quad \big|\widehat{{\Gamma}}f\big|=\left(\sum_{k=1}^{15}\big|\widehat{\Gamma}_{k}f\big|^{2}\right)^{\frac{1}{2}}.
		\end{aligned}
	\end{equation*}
	Finally, let us review the Fourier transform 
	$$ \mathcal{F}(f)(\xi)=\hat{f}(\xi)=\int_{\R^{4}}f(x)e^{-ix\cdot\xi}\d x, \quad \mbox{for} \ f\in L^2_x.$$
	
	\subsection{Preliminary estimates}
	In this subsection, we recall several preliminary estimates that are related to the vector fields. First of all, from the definitions of $\Gamma$ and $\widehat{{\Gamma}}$, we can easily see that the differences between them are constant matrices. Then we have 
	\begin{equation}\label{est:hatGa}
		\sum_{|I|\le K}\big|\widehat{{\Gamma}}^{I}f\big|\lesssim \sum_{|I|\le K}\big|{\Gamma}^{I}f\big|\lesssim \sum_{|I|\le K}\big|\widehat{{\Gamma}}^{I}f\big|,
	\end{equation}
	\begin{equation}\label{est:hatparGa}
		\sum_{|I|\le K}\big|\partial\widehat{{\Gamma}}^{I}f\big|\lesssim \sum_{|I|\le K}\big|\partial{\Gamma}^{I}f\big|\lesssim \sum_{|I|\le K}\big|\partial\widehat{{\Gamma}}^{I}f\big|,
	\end{equation}
	for any smooth $\mathbb{R}$-valued or $\mathbb{C}^{4}$-valued function $f$ and $K\in \mathbb{N}^{+}$.
	
	Now we recall the following estimates related to the vector fields.
	
	\begin{lemma}\cite{Sogge}
		We have the following identities related to commutators
		\begin{equation}\label{commutators}
			[-\Box,\Gamma_k]=0,\quad [-i\gamma^{\mu}\partial_{\mu},\widehat{\Gamma}_{k}]=0,
		\end{equation}
		\begin{equation}\label{commutatorsL0}
			[-\Box,L_0]=-2\Box,\quad [-i\gamma^{\mu}\partial_{\mu},L_0]=-i\gamma^{\mu}\partial_\mu.
		\end{equation}
	\end{lemma}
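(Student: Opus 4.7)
The plan is to verify each of the four commutator identities by direct computation; since $-\Box$ and $-i\gamma^{\mu}\partial_{\mu}$ have constant (scalar or matrix) coefficients, everything reduces to commuting first-order vector fields past partial derivatives via the Leibniz rule and, for the Dirac case, exploiting the Clifford relations~\eqref{equ:gamma}.

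First I would establish $[-\Box,\Gamma_k]=0$ family by family. For translations $\partial_\alpha$ it is immediate. For rotations $\Omega_{ab}=x_a\partial_b-x_b\partial_a$, the spatial piece $[\Delta,\Omega_{ab}]=0$ follows from $[\partial_c,x_a]=\delta_{ca}$ (or from rotation invariance of $\Delta$), and the temporal piece vanishes trivially since $\Omega_{ab}$ is time-independent. For the Lorentz boost $L_a=t\partial_a+x_a\partial_t$, I would compute $[\partial_t^2,L_a]=2\partial_t\partial_a$ and $[\Delta,L_a]=2\partial_a\partial_t$, whose difference is zero. The scaling identity $[-\Box,L_0]=-2\Box$ then follows from $[\partial_t^2,t\partial_t+x^a\partial_a]=2\partial_t^2$ and $[\Delta,t\partial_t+x^a\partial_a]=2\Delta$, giving $[\partial_t^2-\Delta,L_0]=2(\partial_t^2-\Delta)=-2\Box$.

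Next I would turn to $[-i\gamma^\mu\partial_\mu,\widehat{\Gamma}_k]=0$. Translations are again immediate, and the scaling identity $[-i\gamma^\mu\partial_\mu,L_0]=-i\gamma^\mu\partial_\mu$ is a one-line computation since $[\partial_\mu,L_0]=\partial_\mu$ for each $\mu$. The genuinely new ingredient is the correction in $\widehat{\Omega}_{ab}=\Omega_{ab}-\tfrac{1}{2}\gamma^a\gamma^b$: one half of the calculation gives $[-i\gamma^\mu\partial_\mu,\Omega_{ab}]=-i\gamma^a\partial_b+i\gamma^b\partial_a$, and the other half is a matrix commutator $[-i\gamma^\mu\partial_\mu,-\tfrac{1}{2}\gamma^a\gamma^b]=\tfrac{i}{2}[\gamma^\mu,\gamma^a\gamma^b]\partial_\mu$. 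Using~\eqref{equ:gamma} twice to move $\gamma^\mu$ past $\gamma^a\gamma^b$ produces $[\gamma^\mu,\gamma^a\gamma^b]=2\eta_{\mu b}\gamma^a-2\eta_{\mu a}\gamma^b$, and the resulting term cancels the previous $-i\gamma^a\partial_b+i\gamma^b\partial_a$ exactly. The modified boost $\widehat{L}_a=L_a-\tfrac{1}{2}\gamma^0\gamma^a$ is handled in the same fashion, now with the $\mu=0$ contribution carrying a sign from $\eta_{00}=-1$.

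The only real obstacle is the sign bookkeeping from $\eta=\mathrm{diag}(-1,1,1,1,1)$ in the Clifford algebra computation for $\widehat{\Omega}_{ab}$ and $\widehat{L}_a$; the cancellations that make the correction matrices $-\tfrac{1}{2}\gamma^a\gamma^b$ and $-\tfrac{1}{2}\gamma^0\gamma^a$ precisely the right ones to kill the unwanted first-order terms are the whole point of Bachelot's modification, and once the identity $[\gamma^\mu,\gamma^a\gamma^b]=2\eta_{\mu b}\gamma^a-2\eta_{\mu a}\gamma^b$ is in hand the verification becomes automatic.
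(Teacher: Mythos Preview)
Your proposal is correct and follows the same approach as the paper: the paper's own proof consists of the single sentence ``Based on the definitions of the vector fields and an elementary computation, we can obtain~\eqref{commutators} and~\eqref{commutatorsL0},'' so you have in fact supplied considerably more detail than the original. Your Clifford-algebra computation $[\gamma^\mu,\gamma^a\gamma^b]=2\eta_{\mu b}\gamma^a-2\eta_{\mu a}\gamma^b$ and the resulting cancellations for $\widehat{\Omega}_{ab}$ and $\widehat{L}_a$ are exactly right, including the sign coming from $\eta_{00}=-1$ in the boost case.
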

	\begin{proof}
		Based on the definitions of the vector fields and an elementary computation, we can obtain~(\ref{commutators}) and (\ref{commutatorsL0}).
	\end{proof}
	
	\begin{lemma}
		For any multi-index $I\in \mathbb{N}^{15}$, smooth $\mathbb{R}$-valued function $f$ and $\mathbb{C}^{4}$-valued function $\Phi$, we have 
		\begin{equation}\label{est:hatGfPhi}
			\big|\widehat{{\Gamma}}^{I}(fF\Phi)\big|\lesssim \sum_{|I_{1}|+|I_{2}|\le |I|}\left|\Gamma^{I_{1}}f\right|\big|\widehat{{\Gamma}}^{I_{2}}\Phi\big|,
		\end{equation}
		where $F$ is a matrix of size $4\times4$ satisfying~(\ref{FH}).
	\end{lemma}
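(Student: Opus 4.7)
The plan is to argue by induction on $|I|$, exploiting the fact that each modified field $\widehat{\Gamma}_k$ differs from the corresponding ordinary field $\Gamma_k$ only by left multiplication by a constant $4\times 4$ matrix $C_k$ (with $C_k=0$ for translations, $C_k=-\tfrac12\gamma^a\gamma^b$ for $\widehat{\Omega}_{ab}$, and $C_k=-\tfrac12\gamma^0\gamma^a$ for $\widehat{L}_a$). Since $\Gamma_k$ is a genuine first-order vector field and $F$ is constant, the discrepancy
\begin{equation*}
\widehat{\Gamma}_k(fF\Phi) - (\Gamma_k f)F\Phi - fF(\widehat{\Gamma}_k\Phi) = f\bigl(C_k F - F C_k\bigr)\Phi
\end{equation*}
is simply $f$ times a fixed matrix times $\Phi$, which is harmless for the right-hand side of~(\ref{est:hatGfPhi}).

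To run the induction cleanly I would first upgrade the inequality to the following structural identity: for every multi-index $I$ there exist constant $4\times 4$ matrices $F^{I}_{I_1,I_2}$, indexed by $|I_1|+|I_2|\le|I|$, such that
\begin{equation*}
\widehat{\Gamma}^{I}(fF\Phi) \;=\; \sum_{|I_1|+|I_2|\le|I|}\bigl(\Gamma^{I_1}f\bigr)\,F^{I}_{I_1,I_2}\,\bigl(\widehat{\Gamma}^{I_2}\Phi\bigr).
\end{equation*}
The base case $|I|=0$ holds with $F^{0}_{0,0}=F$. For the inductive step I would factor $\widehat{\Gamma}^{I}=\widehat{\Gamma}_k\widehat{\Gamma}^{I-e_k}=(\Gamma_k+C_k)\widehat{\Gamma}^{I-e_k}$, apply the Leibniz rule for the true vector field $\Gamma_k$ to the inductive decomposition of $\widehat{\Gamma}^{I-e_k}(fF\Phi)$, and convert $\Gamma_k(\widehat{\Gamma}^{I_2}\Phi)$ back to a modified field via $\Gamma_k=\widehat{\Gamma}_k-C_k$; this contributes only new constant-matrix factors. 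The extra term $C_k\cdot\bigl[(\Gamma^{I_1}f)F^{I-e_k}_{I_1,I_2}\widehat{\Gamma}^{I_2}\Phi\bigr]$ produced by the $C_k$ piece is moved past the scalar $\Gamma^{I_1}f$ to become yet another expression of the required form. Since only finitely many constant matrices appear and each is bounded by a universal constant, taking absolute values delivers exactly~(\ref{est:hatGfPhi}).

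The only real obstacle is bookkeeping: one must verify that at every step all ``stray'' copies of $\Gamma_k$ that land on $\widehat{\Gamma}^{I_2}\Phi$ can be re-absorbed into combinations of $\widehat{\Gamma}^{J}\Phi$ with $|J|\le|I_2|+1$ and constant-matrix coefficients. This closes because $[\Gamma_k,\widehat{\Gamma}_j]$, and hence $[\Gamma_k,\widehat{\Gamma}^{I_2}]$, is expressible as such a finite sum, a direct algebraic check using the definitions of $\widehat{\Omega}_{ab}$, $\widehat{L}_a$ together with the anti-commutation relations~(\ref{equ:gamma}). No analytic input is required; the lemma is ultimately algebraic, and (\ref{est:hatGa})--(\ref{est:hatparGa}) are not even needed.
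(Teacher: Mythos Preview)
Your proposal is correct and follows essentially the same strategy as the paper: verify the one-step identity
\[
\widehat{\Gamma}_k(fF\Phi)=(\Gamma_k f)F\Phi+fF(\widehat{\Gamma}_k\Phi)+f[C_k,F]\Phi
\]
(which is exactly what the paper writes out case by case for $\partial_\alpha$, $\widehat{\Omega}_{ab}$, $\widehat{L}_a$) and then induct on $|I|$. The paper simply says ``using an induction argument'' at that point; your structural identity with constant matrices $F^{I}_{I_1,I_2}$ is precisely the equality one needs to make that induction go through, so you have supplied the detail the paper omits. One remark: the commutator bookkeeping in your last paragraph is not actually needed. If you peel off the leftmost factor $\widehat{\Gamma}_k$ (so $k$ is minimal with $I_k>0$), then by induction every $\Gamma^{I_1}$ and $\widehat{\Gamma}^{I_2}$ appearing in the decomposition of $\widehat{\Gamma}^{I-e_k}(fF\Phi)$ involves only indices $\ge k$, and hence $\Gamma_k\Gamma^{I_1}=\Gamma^{I_1+e_k}$ and $\widehat{\Gamma}_k\widehat{\Gamma}^{I_2}=\widehat{\Gamma}^{I_2+e_k}$ already have the correct ordering---no reordering via $[\Gamma_k,\widehat{\Gamma}_j]$ is required.
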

	
	\begin{proof}
		By an elementary computation and the Leibniz rule, we have
		\begin{equation*}
			\partial_\alpha(fF\Phi)=(\partial_\alpha f)F\Phi+fF(\partial_\alpha\Phi),\\
		\end{equation*} 
		\begin{equation*}
			\begin{aligned}
				\widehat{{\Omega}}_{ab}(fF\Phi)&=(\Omega_{ab}-\frac{1}{2}\gamma^a\gamma^b)(fF\Phi)
				=\Omega_{ab}(fF\Phi)-f(\frac{1}{2}\gamma^a\gamma^bF\Phi)\\
				&=(\Omega_{ab}f)F\Phi+fF(\Omega_{ab}\Phi)-f(\frac{1}{2}\gamma^a\gamma^bF)\Phi\\
				&=(\Omega_{ab}f)F\Phi+fF(\widehat{\Omega}_{ab}\Phi)+\frac{1}{2}f(F\gamma^a\gamma^b-\gamma^a\gamma^bF)\Phi,\\
			\end{aligned}
		\end{equation*}
		\begin{equation*}
			\begin{aligned}
				\widehat{L}_a(fF\Phi)&=(L_a-\frac{1}{2}\gamma^0\gamma^a)(fF\Phi)=
				L_{a}(fF\Phi)-f(\frac{1}{2}\gamma^0\gamma^aF\Phi)\\
				&=(L_{a}f)F\Phi+fF(L_{a}\Phi)-f(\frac{1}{2}\gamma^0\gamma^aF)\Phi\\
				&=(L_{a}f)F\Phi+fF(\widehat{L}_{a}\Phi)+\frac{1}{2}f(F\gamma^0\gamma^a-\gamma^0\gamma^aF)\Phi.
			\end{aligned}
		\end{equation*}
		By the above identities, we deduce~\eqref{est:hatGfPhi} for $I\in\mathbb{N}^{15}$ with $|I|=1.$
		
		Then using an induction argument, we obtain~\eqref{est:hatGfPhi}.
	\end{proof}
	
	\begin{lemma}
		For any multi-index $I\in \mathbb{N}^{15}$, and any smooth $\mathbb{C}^{4}$-valued functions $\Phi_{1}$ and $\Phi_{2}$, we have 
		\begin{equation}\label{est:hatGG}
			\left|\Gamma^{I}\left(\Phi_{1}^{*}H\Phi_{2}\right)\right|\lesssim \sum_{|I_{1}|+|I_{2}|\le |I|}\big|\widehat{{\Gamma}}^{I_{1}}\Phi_{1}\big|\big|\widehat{{\Gamma}}^{I_{2}}\Phi_{2}\big|,
		\end{equation}
		where $H$ is a matrix of size $4\times4$ satisfying~(\ref{FH}).
	\end{lemma}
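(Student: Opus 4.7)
The plan is to mirror the strategy of the previous lemma: handle the case $|I|=1$ by direct calculation and then bootstrap via induction on $|I|$. The key structural observation is that $\Phi_1^*H\Phi_2$ is a scalar-valued function, so each vector field in $\Gamma=(\partial,\Omega,L)$, being a first-order differential operator with real coefficients, obeys the ordinary Leibniz rule on this product.

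For the base case I would split into the three families. For a translation $\partial_\alpha$, since $\widehat{\partial}_\alpha=\partial_\alpha$ the Leibniz rule immediately gives the desired form. For a rotation, substituting $\Omega_{ab}\Phi_i=\widehat{\Omega}_{ab}\Phi_i+\tfrac{1}{2}\gamma^a\gamma^b\Phi_i$ into Leibniz yields
\begin{equation*}
\Omega_{ab}(\Phi_1^*H\Phi_2)=(\widehat{\Omega}_{ab}\Phi_1)^*H\Phi_2+\Phi_1^*H(\widehat{\Omega}_{ab}\Phi_2)+\tfrac{1}{2}\Phi_1^*\bigl((\gamma^a\gamma^b)^*H+H\gamma^a\gamma^b\bigr)\Phi_2,
\end{equation*}
and the last summand is a zero-order correction with constant coefficient, bounded pointwise by $|\Phi_1||\Phi_2|$; this fits into the right-hand side of~(\ref{est:hatGG}) with $|I_1|=|I_2|=0\le 1$. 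An analogous computation for Lorentz boosts, using $L_a=\widehat{L}_a+\tfrac{1}{2}\gamma^0\gamma^a$, completes the case $|I|=1$.

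For the induction, it is cleanest to prove the stronger structural statement that $\Gamma^I(\Phi_1^*H\Phi_2)$ is a finite linear combination of expressions of the form $(\widehat{\Gamma}^{I_1}\Phi_1)^*\widetilde{H}_{I_1,I_2}(\widehat{\Gamma}^{I_2}\Phi_2)$, with $|I_1|+|I_2|\le|I|$ and each $\widetilde{H}_{I_1,I_2}$ a constant $4\times 4$ matrix built from $H$ and words in $\gamma^0,\ldots,\gamma^4$. Writing $\Gamma^I=\Gamma_j\Gamma^{I'}$ with $|I'|=|I|-1$, I apply the inductive hypothesis to $\Gamma^{I'}$ and then the base-case identities to each constituent scalar product (with the generic $\widetilde{H}_{I_1,I_2}$ playing the role of $H$, which only uses constancy of the middle matrix). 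Each resulting term either raises $|I_1|$ or $|I_2|$ by one, or produces a new zero-order correction whose index sum is strictly smaller; in both cases $|I_1|+|I_2|\le|I|$ is preserved. The pointwise bound~(\ref{est:hatGG}) then follows from the trivial estimate $|v^*Aw|\lesssim|v||w|$ for constant $A$.

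The main obstacle is pure bookkeeping: tracking the middle matrix as the induction progresses. The essential observation is that each Leibniz step inserts only finite products of $H$ and of $\gamma^\mu$'s, so the middle stays constant and its operator norm is absorbed into the implicit constant. Notably, the Hermiticity of $H$ from~(\ref{FH}) plays no role in this argument; only that $H$ is a constant matrix is used.
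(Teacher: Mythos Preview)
Your proposal is correct and follows essentially the same route as the paper: a direct Leibniz computation for $|I|=1$ that produces the hatted derivatives plus a constant-matrix zero-order correction, followed by induction on $|I|$. The paper simplifies the correction term using $(\gamma^a\gamma^b)^*=-\gamma^a\gamma^b$ (for $a\neq b$) rather than leaving it as $(\gamma^a\gamma^b)^*$, and its induction is stated more tersely, but the argument is the same; your remark that the Hermiticity of $H$ is not actually used is also accurate.
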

	
	\begin{proof}
		By~\eqref{equ:gamma} and an elementary computation, we have 
		\begin{equation*}
			\begin{aligned}
				\Omega_{ab}\left(\Phi_{1}^{*}\right)&=\left(\Omega_{ab}\Phi_{1}\right)^*=\big(\widehat{\Omega}_{ab}\Phi_{1}\big)^{*}-\frac{1}{2}\Phi_{1}^{*}\gamma^{a}\gamma^{b},\\ L_{a}\left(\Phi_{1}^{*}\right)&=\left(L_a\Phi_{1}\right)^*=\big(\widehat{L}_{a}\Phi_{1}\big)^{*}+\frac{1}{2}\Phi_{1}^{*}\gamma^{0}\gamma^{a}.
			\end{aligned}
		\end{equation*}
		Then by the Leibniz rule of $\partial$, we obtain
		\begin{equation*}
			\begin{aligned}
				\partial_\alpha\left(\Phi_{1}^*H\Phi_{2}\right)&=\big(\partial_{\alpha}\left(\Phi_{1}^{*}\right)\big)H\Phi_{2}+\Phi_{1}^*H\big(\partial_{\alpha}\Phi_{2}\big)=\big(\partial_{\alpha}\Phi_{1}\big)^*H\Phi_{2}+\Phi_{1}^*H\big(\partial_{\alpha}\Phi_{2}\big),\\
				\Omega_{ab}\left(\Phi_{1}^{*}H\Phi_{2}\right)&=\big(\Omega_{ab}\left(\Phi_{1}^{*}\right)\big)H\Phi_{2}+\Phi_{1}^*H\big(\Omega_{ab}\Phi_{2}\big)\\
				&=\big(\widehat{\Omega}_{ab}\Phi_{1}\big)^{*}H\Phi_{2}+\Phi_{1}^{*}H\big({\widehat{\Omega}_{ab}}\Phi_{2}\big)-\frac{1}{2}\Phi_{1}^{*}\left(\gamma^{a}\gamma^{b}H-H\gamma^{a}\gamma^{b}\right)\Phi_{2},\\
				L_{a}\left(\Phi_{1}^{*}H\Phi_{2}\right)&=\big(L_{a}\left(\Phi_{1}^{*}\right)\big)H\Phi_{2}+\Phi_{1}^*H\big(L_{a}\Phi_{2}\big)\\
				&=\big(\widehat{L}_{a}\Phi_{1}\big)^{*}H\Phi_{2}+\Phi_{1}^{*}H\big({\widehat{L}_{a}}\Phi_{2}\big)+\frac{1}{2}\Phi_{1}^{*}\left(\gamma^{0}\gamma^{a}H+H\gamma^{0}\gamma^{a}\right)\Phi_{2}.\\
			\end{aligned}
		\end{equation*}
		Combining the above identities with the Leibniz rule of $\partial$ and then using an induction argument, we can get~\eqref{est:hatGG}.
	\end{proof}
	
	The following lemma can be found in~\cite{Klainerman85}.
	\begin{lemma}\label{le:GlobalSobolev}
		For all sufficiently regular functions $f=f(t,x)$ and all $I\in\mathbb{N}^{15},$ we have the following Klainerman-Sobolev inequality
		\begin{equation}\label{est:GloSob}
			|f(t,x)|\lesssim \langle t\rangle^{-\frac{3}{2}}\sup_{0\leq s\leq 2t}\sum_{|I|\le 4}\left\|\Gamma^{I}f(s)\right\|_{L^2_x}.
		\end{equation}
	\end{lemma}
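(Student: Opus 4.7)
The lemma is a Klainerman--Sobolev type estimate in $(1+4)$ dimensions that avoids the scaling vector field $L_{0}$, at the cost of one additional derivative ($|I|\le 4$ instead of the usual $\lfloor n/2\rfloor+1=3$) and a supremum over $s\in[0,2t]$. It is stated as appearing in \cite{Klainerman85}; my plan combines a temporal integration with spatial vector-field Sobolev embeddings.

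The first step is the pointwise-in-$x$ identity
\begin{equation*}
|f(t,x)|^{2}\lesssim t^{-1}\int_{t}^{2t}|f(s,x)|^{2}\,ds+t\int_{t}^{2t}|\partial_{s}f(s,x)|^{2}\,ds,
\end{equation*}
obtained by applying Cauchy--Schwarz to $f(t,x)-f(s,x)=-\int_{t}^{s}\partial_{\tau}f(\tau,x)\,d\tau$ and averaging over $s\in[t,2t]$. This is the source of the supremum over $s\in[0,2t]$ on the right-hand side of (\ref{est:GloSob}); moreover, since $\partial_{s}\in\Gamma$, the second term effectively contributes one extra derivative to the final count, explaining the bound $|I|\le 4$ rather than $|I|\le 3$.

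The second step is a spatial pointwise bound at each $s\in[t,2t]$ for $g\in\{f,\partial_{s}f\}$. I split $\mathbb{R}^{4}$ into the exterior region $|x|\ge s/2$ and the interior region $|x|<s/2$. In the exterior, a standard radial Sobolev embedding on dyadic shells -- using $\partial$ for radial regularity and $\Omega_{ab}$ for angular regularity, exploiting the 4D volume element $r^{3}\,dr\,d\omega$ -- yields $|g(s,x)|\lesssim |x|^{-3/2}\sum_{|I|\le 3}\|\Gamma^{I}g(s)\|_{L^{2}_{x}}$, which is $\lesssim\langle s\rangle^{-3/2}\sum_{|I|\le 3}\|\Gamma^{I}g(s)\|_{L^{2}_{x}}$ because $|x|\ge s/2$. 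In the interior, the boosts enter via the identity $\partial_{a}=s^{-1}(L_{a}-x_{a}\partial_{t})$; combined with the bound $|x|/s\le 1/2$, a 4D Sobolev embedding on a ball of radius $\sim s$ around $x$ is converted into an estimate involving $\Gamma$-derivatives with favorable powers of $s^{-1}$ from the boosts. Plugging both spatial bounds into the temporal identity yields the claimed $\langle t\rangle^{-3/2}$ decay.

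The main obstacle is the interior region. Without the scaling field $L_{0}$, the identity $\partial_{a}=s^{-1}(L_{a}-x_{a}\partial_{t})$ leaves residual $\partial_{t}$-terms that carry no $s$-decay when estimated in a fixed-time $L^{2}$ norm; indeed, at a single time one sees $|g(s,x)|^{2}$ bounded by terms of the form $s^{2j-4}\|L^{k-j}\partial_{t}^{j}g(s)\|^{2}_{L^{2}_{x}}$ whose $j=k$ contributions grow in $s$. It is precisely the temporal integration over $s\in[t,2t]$ from the first step that allows these bad $\partial_{t}$-terms to be absorbed, at the expense of one extra derivative. Carefully balancing the derivative count and the decay exponents between the temporal and spatial steps so that the estimate closes within $|I|\le 4$ is the crux of the argument and explains why the $\sup_{s\in[0,2t]}$ cannot be removed.
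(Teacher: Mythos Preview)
The paper does not actually prove this lemma; it simply records the statement and cites \cite{Klainerman85}. So there is no ``paper's own proof'' to compare against, and your task was really to supply a proof.

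Your exterior argument ($|x|\ge s/2$) is standard and correct, and the temporal averaging identity in Step~1 is also fine. The problem is the interior region. You correctly identify that at a fixed time $s$ the scaled Sobolev inequality on a ball of radius $\sim s$ gives
\[
|g(s,x)|\ \lesssim\ \sum_{k=0}^{3} s^{\,k-2}\,\|\nabla_x^{k}g(s)\|_{L^2},
\]
and that substituting $\partial_a=s^{-1}(L_a-x_a\partial_t)$ produces, among the good $s^{-2}$ pieces, bad contributions of size $s^{\,j-2}\|\partial_t^{\,j}L^{\,k-j}g(s)\|_{L^2}$; in particular the $k=3$, $j=3$ term is $s\,\|\partial_t^{3}g(s)\|_{L^2}$, which \emph{grows}. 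Your claim is that the single temporal average in Step~1 absorbs this. It does not: plugging the worst interior bound into $t^{-1}\!\int_t^{2t}|f|^2\,ds$ yields a term of order $t^{2}\sup_s\|\partial_t^{3}f(s)\|_{L^2}^{2}$, and the companion term $t\!\int_t^{2t}|\partial_s f|^2\,ds$ is even worse. One pass of the fundamental theorem of calculus in $s$ trades exactly one power of $t^{1/2}$ for one $\partial_t$; it cannot cancel three accumulated $\partial_t$'s carrying a net factor of $s$.

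What is missing is a genuinely different treatment of the interior, not a repair of the counting. One route is to work directly in the $5$-dimensional slab $[t,2t]\times\{|y|<t\}$ with the frame $\{\partial_t,\,t^{-1}L_1,\dots,t^{-1}L_4\}$ after rescaling to a unit region, and to check carefully that this frame has uniformly bounded coefficients \emph{and commutators} so that the corresponding Sobolev norm is equivalent to the standard one; the delicate point is then that the time direction contributes at most one or two derivatives with unfavorable weights, which is what forces both the extra derivative ($|I|\le4$) and the supremum over $[0,2t]$. Another route is to start from the classical Klainerman--Sobolev inequality \emph{with} $L_0$ and eliminate each occurrence of $L_0$ by an integration in time, tracking how many $L_0$'s actually appear in the interior step of that proof. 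Either way, the mechanism that closes the estimate is more structured than ``temporal averaging absorbs the leftover $\partial_t$'s,'' and your sketch as written does not close.
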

	
	\begin{remark}\label{remarkSob}
		Based on Klainerman-Sobolev inequality (\ref{est:GloSob}), we can obtain the pointwise decay estimate of a function with $\langle t\rangle^{-\frac{3}{2}}$ rate, which is very useful to establish global existence of solutions to the system~(\ref{equ:DKG}). We do not need to bound the $L^2$ norm involving the scaling vector field $L_0$ (such as $\|L_0\Gamma^I f\|_{L^2_x}$), which is not compatible with the Dirac-Klein-Gordon system. What we need to consider is how to control the $L^2$ norm $\|\Gamma^If\|_{L^2_x}$ till time $2t$ to get the pointwise decay rate of $f$ at time $t$ when applying the inequality (\ref{est:GloSob}). Thus we will use the fixed point iteration method to complete the proof of Theorem~\ref{thm:main}.
	\end{remark}

	\section{$L^2$ estimates for linear equations}\label{estimates}
	In this section, we will give some $L^2$ estimates for Dirac equations and Klein-Gordon equations. These estimates will help us bound the $L^2$ norm of the undifferentiated solutions to Dirac equations and Klein-Gordon equations.
	
	First, we introduce the $L^2$ estimate for the solution to the Dirac equation.
	\begin{lemma}\label{lem:solL2Dirac}
		Suppose $\phi=\phi(t,x)$ is the solution to the Cauchy problem 
		\begin{equation*}
			-i\gamma^{\mu}\partial_{\mu}\phi+M\phi=G(t,x)\quad \mbox{with}\quad \phi|_{t=t_0}=\phi_{0},
		\end{equation*}
		where $G(t,x):\R^{1+4}\to \C^{4}$ is a sufficiently regular function, 
		then we have 
		\begin{equation}\label{est:phiL2}
			\|\phi\|_{L^2_x}\leq\|\phi_{0}\|_{L^2_x}+\int_{t_0}^{t}\|G(s,x)\|_{L^2_x}\d s.
		\end{equation}
	\end{lemma}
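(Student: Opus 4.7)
The plan is to derive the estimate as a standard $L^2$ energy estimate for the Dirac equation, using the algebraic properties of the Dirac matrices listed in~(\ref{equ:gamma}) together with integration by parts.

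First, I would set $E(t) := \|\phi(t)\|_{L^2_x}^2 = \int_{\R^4}\phi^*\phi\,\d x$ and compute $\tfrac{d}{dt}E(t) = 2\,\mathrm{Re}\int \phi^*\partial_t\phi\,\d x$. Using $(\gamma^0)^2 = I_4$ (a consequence of the Clifford identity with $\eta_{00}=-1$), I multiply the equation $-i\gamma^\mu\partial_\mu\phi+M\phi=G$ on the left by $i\gamma^0$ to solve for $\partial_t\phi$ in the form
\begin{equation*}
\partial_t\phi \;=\; -\gamma^0\gamma^a\partial_a\phi \;-\; iM\gamma^0\phi \;+\; i\gamma^0 G.
\end{equation*}
Substituting this and its adjoint into $\tfrac{d}{dt}E(t)$ produces three types of terms: a kinetic term with spatial derivatives, a mass term, and a source term.

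Next, I would show that the kinetic and mass terms contribute zero. For the mass term, $iM\phi^*\gamma^0\phi$ and its adjoint cancel because $(\gamma^0)^*=\gamma^0$ (the $\mu=0$ case of~(\ref{equ:gamma})). For the kinetic term, I use $(\gamma^a)^*=-\gamma^a$ for $a=1,\dots,4$ together with the anticommutation relation $\gamma^0\gamma^a=-\gamma^a\gamma^0$, then integrate by parts in $x$ (the boundary terms vanish by sufficient regularity/decay of $\phi$); the two resulting expressions cancel exactly. What is left is the source contribution
\begin{equation*}
\frac{d}{dt}E(t) \;=\; i\!\int \big(\phi^*\gamma^0 G - G^*\gamma^0\phi\big)\,\d x,
\end{equation*}
which by Cauchy--Schwarz and the boundedness of $\gamma^0$ is bounded in absolute value by $2\,\|\phi(t)\|_{L^2_x}\|G(t)\|_{L^2_x}$.

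Finally, I would convert this into an estimate on $\|\phi\|_{L^2_x}$ itself: since $\tfrac{d}{dt}\|\phi\|_{L^2_x}^2 = 2\|\phi\|_{L^2_x}\,\tfrac{d}{dt}\|\phi\|_{L^2_x}$, I obtain $\tfrac{d}{dt}\|\phi\|_{L^2_x} \le \|G\|_{L^2_x}$ (with a short justification to handle points where $\|\phi\|_{L^2_x}$ could vanish, e.g.\ by working with $\sqrt{E(t)+\delta}$ and sending $\delta\to 0$). Integrating from $t_0$ to $t$ yields~(\ref{est:phiL2}). The only delicate point is verifying the exact cancellation of the spatial derivative terms; everything else is routine once the algebraic identities from~(\ref{equ:gamma}) are applied carefully.
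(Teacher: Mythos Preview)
Your proposal is correct and follows essentially the same approach as the paper: the paper multiplies the equation by $i\phi^*\gamma^0$, adds the conjugate transpose to obtain $\partial_t(\phi^*\phi)+\partial_a(\phi^*\gamma^0\gamma^a\phi)=-2\,\mathrm{Im}(\phi^*\gamma^0 G)$, integrates in $x$, and then in $t$. Your organization (solving for $\partial_t\phi$ first and then computing $\tfrac{d}{dt}E(t)$) is just a minor rephrasing of the same computation, and you even address the vanishing-norm technicality that the paper omits.
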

	
	\begin{proof}
		Multiplying the equation by $i\phi^*\gamma^0$, then we get
		\begin{equation}\label{multiDirac}
			\phi^*\partial_t\phi+\phi^*\gamma^0\gamma^a\partial_a\phi+iM\phi^*\gamma^0\phi=i\phi^*\gamma^0G.
		\end{equation}
		Taking the conjugate transpose of~(\refeq{multiDirac}), we have
		\begin{equation}\label{Hermitian}
			(\partial_t\phi)^*\phi+(\partial_a\phi)^*\gamma^0\gamma^a\phi-iM\phi^*\gamma^0\phi=-iG^*\gamma^0\phi.
		\end{equation}
		Adding (\ref{Hermitian}) and (\refeq{multiDirac}), we obtain
		\begin{equation*}
			\partial_t(\phi^*\phi)+\partial_a(\phi^*\gamma^0\gamma^a\phi)=-2{\rm{Im}}(\phi^*\gamma^0G).
		\end{equation*}
		Integrating with respect to $x$, 
		\begin{equation*}
			\partial_t\|\phi\|_{L^2_x}^2\leq2\int_{\R^{4}}|\phi^*\gamma^0G|\d x\leq2\|\phi\|_{L^2_x}\|G\|_{L^2_x}.
		\end{equation*}
		Then we obtain
		\begin{equation*}
			\partial_t\|\phi\|_{L^2_x}\leq\|G\|_{L^2_x}.
		\end{equation*}
		Finally integrating with respect to $t$, we get (\ref{est:phiL2}).
	\end{proof}
	Now, we give some estimates for the solution to the Klein-Gordon equation.
	\begin{lemma}\label{lem:solKG}
		Suppose $u=u(t,x)$ is the solution to the Cauchy problem
		\begin{equation}\label{equ: masswave}
			-\Box u+m^2u=G(t,x)   \quad   \mbox{with}  \quad (u,\partial_tu)|_{t=t_0}=(u_0,u_1),
		\end{equation}
		where $G(t,x):\R^{1+4}\to \R$ is a sufficiently regular function, 
		then the following estimates hold.
		\begin{enumerate}
			\item {If \rm{$m\in[0,1)$,}} for all $\delta_1\in\R$, we have
			\begin{equation}\label{est:uL2}
				\|u\|_{L^2_x}\lesssim\|u_0\|_{L^2_x}+\|u_1\|_{L^1_x}+\|u_1\|_{L^2_x}+\int_{t_0}^{t}(s^{\delta_1}\|G(s,x)\|_{L^2_x}+s^{-\delta_1}\|G(s,x)\|_{L^1_x})\d s.
			\end{equation}
			\item {If \rm {$m=1$,}} it holds
			\begin{equation}\label{est:uL22}
				\|u\|_{L^2_x}\lesssim\|u_0\|_{L^2_x}+\|u_1\|_{L^2_x}+\int_{t_0}^{t}\|G(s,x)\|_{L^2_x} \d s.
			\end{equation}
			\item\cite{Sogge} {If \rm{$m\in[0,1]$,}} there holds
			\begin{equation}\label{est:energy}
				\mathcal{E}_m(t,u)^{\frac{1}{2}}\lesssim\mathcal{E}_m(t_0,u)^{\frac{1}{2}}+\int_{t_0}^{t}\|G(s,x)\|_{L^2_x}\d s,
			\end{equation}
			where $ \mathcal{E}_m(t,u)$ is the natural energy defined by
			\begin{equation*}
				\mathcal{E}_m(t,u):=\int_{\R^{4}}(|\partial_tu|^2+\sum_{a=1,2,3,4}|\partial_au|^2+m^2u^2)\d x.
			\end{equation*}
		\end{enumerate}
	\end{lemma}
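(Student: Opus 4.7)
\emph{Plan.} The three parts correspond to three different regimes. Part (iii) is standard: multiplying \eqref{equ: masswave} by $\partial_t u$ and integrating by parts over $\R^4$ gives $\tfrac{\d}{\d t}\mathcal E_m(t,u)=2\int_{\R^4}G\,\partial_t u\,\d x$, from which Cauchy--Schwarz produces $\tfrac{\d}{\d t}\mathcal E_m^{1/2}\le\|G\|_{L^2_x}$, and integration in $t$ yields \eqref{est:energy}. I would simply record that argument since it is cited from Sogge.

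For parts (i) and (ii) I would pass to the Fourier side and use the explicit representation
\begin{equation*}
\hat u(t,\xi)=\cos(\omega t)\,\hat u_0(\xi)+\frac{\sin(\omega t)}{\omega}\hat u_1(\xi)+\int_{t_0}^{t}\frac{\sin(\omega(t-s))}{\omega}\hat G(s,\xi)\,\d s,
\end{equation*}
where $\omega(\xi)=\sqrt{|\xi|^2+m^2}$. In case (ii), $m=1$ forces $\omega\ge 1$ uniformly on $\R^4_\xi$, hence $|\omega^{-1}|\le 1$, so the triangle inequality, Plancherel, and Minkowski's inequality in $s$ give \eqref{est:uL22} at once.

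Case (i) is the substantive one, because for $m$ close to $0$ the multiplier $\omega^{-1}$ is unbounded near $\xi=0$. The plan is to split frequency space. For the $u_1$ contribution, I would split at $|\xi|=1$: on $\{|\xi|>1\}$ use $\omega^{-1}\le|\xi|^{-1}\le 1$ and Plancherel to get $\|u_1\|_{L^2_x}$, and on $\{|\xi|\le 1\}$ use $\|\hat u_1\|_{L^\infty_\xi}\le\|u_1\|_{L^1_x}$ together with the four-dimensional computation
\begin{equation*}
\int_{|\xi|\le 1}\frac{\d\xi}{|\xi|^2+m^2}\lesssim\int_0^1 r\,\d r\lesssim 1,
\end{equation*}
which is uniform in $m\in[0,1]$. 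For the Duhamel term, at each fixed $s$ I would split at a scale $|\xi|=A(s)$: the low-frequency piece is bounded by $A\|G(s)\|_{L^1_x}$ by the same computation and the high-frequency piece by $A^{-1}\|G(s)\|_{L^2_x}$ via $\omega^{-1}\le A^{-1}$ and Plancherel. Choosing $A(s)=s^{-\delta_1}$ turns the integrand into $s^{-\delta_1}\|G(s)\|_{L^1_x}+s^{\delta_1}\|G(s)\|_{L^2_x}$, and Minkowski's inequality in $s$ then produces \eqref{est:uL2} for every $\delta_1\in\R$.

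The main obstacle, and the ingredient that drives everything, is the uniform-in-mass integrability of $\omega^{-2}$ over low frequencies, which holds precisely because $d=4$; the corresponding integral in $\R^2$ diverges logarithmically as $m\to 0$, and in $\R^3$ it produces only a $\sqrt{A}$-type bound rather than the linear $A$ that feeds cleanly into the $s^{\pm\delta_1}$ weights. This dimensional advantage is what permits one to trade the $L^1$ norm of the source against the $L^2$ norm of the propagated function with a constant independent of $m$, and it is the key reason Lemma~\ref{lem:solKG} is tailored to the four-dimensional problem.
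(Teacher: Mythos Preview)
Your proposal is correct and follows essentially the same route as the paper: Fourier representation plus Plancherel, with $\omega\ge 1$ handling case (ii) directly, and in case (i) a frequency split (at scale $1$ for the $u_1$ piece and at scale $s^{-\delta_1}$ for the Duhamel piece) combined with the four-dimensional fact $\int_{|\xi|\le A}\omega^{-2}\,\d\xi\lesssim A^2$ uniformly in $m$. The only cosmetic difference is that the paper cuts the $u_1$ term at $\xi_m=1$ rather than $|\xi|=1$, which is immaterial.
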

	
	\begin{proof}
		\textbf{Proof of (i).} Suppose that $m\in[0,1).$ Taking the Fourier transform in the Cauchy problem \eqref{equ: masswave} with respect to $x$ and using the Duhamel's principle, we get
		\begin{equation*}
			\left\{\begin{aligned}
				\partial_{tt} \hat{u}(t,\xi)+|\xi|^2\hat{u}(t,\xi)+m^2\hat{u}(t,\xi)=\widehat{G}(t,\xi),\\
				\hat{u}(t_0,\xi)=\hat{u}_0(\xi),\quad \partial_t\hat{u}(t_0,\xi)=\hat{u}_1(\xi).
			\end{aligned}\right.
		\end{equation*}
		We solve the above second order ODE with respect to $t$ obtaining the expression of the solution $u$ in Fourier space
		\begin{equation*}
			\begin{aligned}
				\hat{u}(t,\xi)=\cos(\xi_m(t-t_0))\hat{u}_0(\xi)+\frac{\sin(\xi_m(t-t_0))}{\xi_m}\hat{u}_1(\xi)
				+\int_{t_0}^{t}\frac{\sin(\xi_m(t-s))}{\xi_m}\widehat{G}(s,\xi)\d s,
			\end{aligned}
		\end{equation*}
		for $(t,\xi)\in [t_0,\infty)\times\mathbb{R}^4$, where $\xi_m=\sqrt{|\xi|^2+m^2}.$
		
		Then from the Plancherel theorem, we deduce
		\begin{equation*}
			\begin{aligned}
				\|u\|_{L^2_x}\lesssim\|\hat{u}(t,\xi)\|_{L^2_\xi}
				&\lesssim\left\|\cos(\xi_m(t-t_0))\hat{u}_0(\xi)\right\|_{L^2_\xi}+\left\|\frac{\sin(\xi_m(t-t_0))}{\xi_m}\hat{u}_1(\xi)\right\|_{L^2_\xi}\\
				&+\int_{t_0}^{t}\left\|\frac{\sin(\xi_m(t-s))}{\xi_m}\widehat{G}(s,\xi)\right\|_{L^2_\xi}\d s.
			\end{aligned}
		\end{equation*}
		Now, we estimate each term of the above inequality in turn. For the first one, we have
		\begin{equation*}
			\left\|\cos(\xi_m(t-t_0))\hat{u}_0(\xi)\right\|_{L^2_\xi}\lesssim\|\hat{u}_0(\xi)\|_{L^2_\xi}.
		\end{equation*}
		Then, for the second one, we can deduce
		\begin{equation*}
			\begin{aligned}
				&\left\|\frac{\sin(\xi_m(t-t_0))}{\xi_m}\hat{u}_1(\xi)\right\|_{L^2_\xi}\\
				\lesssim&\left\|\frac{\sin(\xi_m(t-t_0))}{\xi_m}\hat{u}_1(\xi)\textbf{1}_{\xi_m\geq1}\right\|_{L^2_\xi}+\left\|\frac{\sin(\xi_m(t-t_0))}{\xi_m}\hat{u}_1(\xi)\textbf{1}_{\xi_m\leq1}\right\|_{L^2_\xi}\\
				\lesssim&\|\hat{u}_1(\xi)\|_{L^2_\xi}+\left(\int_{|\omega|=1}\int_{0}^{\sqrt{1-m^2}}r\hat{u}^2_1(r) \d r \d \omega\right)^\frac{1}{2}\\
				\lesssim&\|\hat{u}_1(\xi)\|_{L^2_\xi}+\|\hat{u}_1(\xi)\|_{L^\infty_\xi}.
			\end{aligned}
		\end{equation*}
		Similarly, for the third one, we get
		\begin{equation*}
			\begin{aligned}
				&\left\|\frac{\sin(\xi_m(t-s))}{\xi_m}\widehat{G}(s,\xi)\right\|_{L^2_\xi}\\
				\lesssim &\left\|\frac{\sin(\xi_m(t-s))}{\xi_m}\widehat{G}(s,\xi)\textbf{1}_{|\xi|\geq s^{-\delta_1}}\right\|_{L^2_\xi}+\left\|\frac{\sin(\xi_m(t-s))}{\xi_m}\widehat{G}(s,\xi)\textbf{1}_{|\xi|\leq s^{-\delta_1}}\right\|_{L^2_\xi}\\
				\lesssim &s^{\delta_1}\|\widehat{G}(s,\xi)\|_{L^2_\xi}+\left(\int_{|\omega|=1}\int_{0}^{s^{-\delta_1}}r\widehat{G}^2(s,r) \d r \d \omega\right)^\frac{1}{2}\\
				\lesssim&s^{\delta_1}\|\widehat{G}(s,\xi)\|_{L^2_\xi}+s^{-\delta_1}\|\widehat{G}(s,\xi)\|_{L^\infty_\xi},\quad\mbox{for all}\ \delta_1\in\R.
			\end{aligned}
		\end{equation*}
		Therefore, using again the Plancherel theorem and the above inequalities, we obtain (\ref{est:uL2}).
		
		\textbf{Proof of (ii).} Let $m=1.$ We obtain the expression of the solution $u$ in Fourier space
		\begin{equation*}
			\begin{aligned}
				\hat{u}(t,\xi)=&\cos(\sqrt{|\xi|^2+1}(t-t_0))\hat{u}_0(\xi)+\frac{\sin(\sqrt{|\xi|^2+1}(t-t_0))}{\sqrt{|\xi|^2+1}}\hat{u}_1(\xi)\\
				&+\int_{t_0}^{t}\frac{\sin(\sqrt{|\xi|^2+1}(t-s))}{\sqrt{|\xi|^2+1}}\widehat{G}(s,\xi)\d s,
			\end{aligned}
		\end{equation*}
		for $(t,\xi)\in [t_0,\infty)\times\mathbb{R}^4$.
		
		Also,
		\begin{equation*}
			\begin{aligned}
				&\left\|\cos(\sqrt{|\xi|^2+1}(t-t_0))\hat{u}_0(\xi)\right\|_{L^2_\xi}\lesssim\|\hat{u}_0(\xi)\|_{L^2_\xi},\\
				&\left\|\frac{\sin(\sqrt{|\xi|^2+1}(t-t_0))}{\sqrt{|\xi|^2+1}}\hat{u}_1(\xi)\right\|_{L^2_\xi}\lesssim\|\hat{u}_1(\xi)\|_{L^2_\xi},\\
				&\left\|\frac{\sin(\sqrt{|\xi|^2+1}(t-s))}{\sqrt{|\xi|^2+1}}\widehat{G}(s,\xi)\right\|_{L^2_\xi}\lesssim\|\widehat{G}(s,\xi)\|_{L^2_\xi}.\\
			\end{aligned}
		\end{equation*}
		Therefore, using again the Plancherel theorem and the above inequalities, we can obtain (\ref{est:uL22}).
		
		\textbf{Proof of (iii).}
		The proof is standard, and we omit it.
	\end{proof}

	\section{Proof of Theorem~\ref{thm:main}}\label{existence}
	In this section, we will rely on the Banach fixed point theorem to prove global existence of the system (\ref{equ:DKG}). Remember that $N\geq 7$ in Theorem~\ref{thm:main}.
	\subsection{Setting of the iteration method}
	First, we introduce the foundations for the fixed point iteration method: a function space and a contraction mapping. The function space represented by $X$ is defined as follows.
	\begin{definition}\label{normspace}
		Let $\phi=\phi(t,x), u=u(t,x)$ be sufficiently regular functions. We say that a pair $(\phi, u)$ belongs to the function space $X$ if the pair satisfies
		\begin{enumerate}
			\item[(i)] $(\phi,u,\partial_t u)|_{t=0}=(\psi_{0},v_0,v_1),$ 
			\item[(ii)] $\|(\phi,u)\|_X\leq C_1\varepsilon$,
		\end{enumerate}
		in which $C_1\gg1$ is a large constant to be chosen, the size of the initial data $\varepsilon\ll1$ is sufficiently small such that $C_1\varepsilon\ll1$, and the norm $\|(\cdot,\cdot)\|_X$ is defined by
		\begin{equation}\label{norm}
			\|(\phi,u)\|_X:=\sup_{t\geq0,|I|\leq N}\|\widehat{\Gamma}^I\phi\|_{L^2_x}+\sup_{t\geq0,|I|\leq N}\langle t\rangle^{-\frac{1}{4}}\|\Gamma^Iu\|_{L^2_x}.
		\end{equation}
	\end{definition}
	
	\begin{remark}
		Note that the space $X$ is nonempty. For example, let $\phi=\phi(t,x)$ be the solution to the 4D homogeneous Dirac equation 
		\begin{equation*}
			-i\gamma^{\mu}\partial_{\mu} \phi=0   \quad   \mbox{with}  \quad   \phi|_{t=0}=\psi_{0}.
		\end{equation*}
		Similarly, let $u=u(t,x)$ be the solution to the 4D homogeneous wave equation
		\begin{equation*}
			-\Box u=0   \quad   \mbox{with}  \quad (u,\partial_tu)|_{t=0}=(v_0,v_1).
		\end{equation*}
		Then $(\phi,u)$ belongs to $X$ by (\ref{est:smallness}) and the constructions of $\phi,u$.
	\end{remark}
	
	\begin{remark}
		In (\ref{norm}), the weight $\langle t\rangle^{-\frac{1}{4}}$ is chosen according to Lemma~\ref{lem:solKG} (i). When taking different $\delta_1$ in~(\ref{est:uL2}), we have different weights. Since the decay rates of $\|\Gamma^I(\phi^*H\phi)\|_{L^2_x}$ and $\|\Gamma^I(\phi^*H\phi)\|_{L^1_x}$ are different (see Lemma~\ref{est:nonlinear}), we need to take suitable $\delta_1$ to balance these two decay rates. The weight $\langle t\rangle^{-\frac{1}{4}}$ is optimal in our method.
	\end{remark}
	We note that the function space $X$ is complete with respect to the metric induced by the norm $\|(\cdot,\cdot)\|_X$.
	
	Next, we give the definition of the solution mapping $T$.
	\begin{definition}\label{mappingT}
		For any $(\phi,u)\in X$, define a mapping T by
		\begin{equation}\label{defT}
			T(\phi,u)=(\tilde{\phi},\tilde{u}),
		\end{equation}
		where $(\tilde{\phi},\tilde{u})$ is the solution to the following Cauchy problem
		\begin{equation}\label{defequ}
			\left\{\begin{aligned}
				-i\gamma^{\mu}\partial_{\mu}\tilde{\phi}+M\tilde{\phi}&=uF\phi,\\
				-\Box \tilde{u}+m^2\tilde{u}&=\phi^{*}H\phi,\\
				(\tilde{\phi},\tilde{u},\partial_t \tilde{u})|_{t=0}&=(\psi_{0},v_0,v_1).
			\end{aligned}\right.
		\end{equation}
	\end{definition}

	\subsection{Global existence of the coupled system (\ref{equ:DKG})}
	
	In this subsection, we will give the proof of Theorem~\ref{thm:main}. Before that,  let us give some important estimates.
	\begin{lemma}\label{highpointdecay}
		For $I\in\mathbb{N}^{15}$ with $|I|\leq N-4$, and $(\phi,u)\in X,$ we have
		$$ |\widehat{\Gamma}^I\phi(t,x)|\lesssim C_1\varepsilon\langle t\rangle^{-\frac{3}{2}},$$
		$$|\Gamma^Iu(t,x)|\lesssim C_1\varepsilon\langle t\rangle^{-\frac{5}{4}}.$$
	\end{lemma}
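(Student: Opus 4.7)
The plan is to apply the Klainerman-Sobolev inequality (\ref{est:GloSob}) directly to $\widehat{\Gamma}^I\phi$ and to $\Gamma^I u$, spending up to four additional vector fields, and then read off the resulting $L^2$ bound from the definition of the $X$-norm in (\ref{norm}). Since $|I|\le N-4$, the total number of vector fields produced is at most $N$, exactly what the $X$-norm controls.

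For the Dirac component, I would set $f=\widehat{\Gamma}^I\phi$ and apply (\ref{est:GloSob}), obtaining
$$|\widehat{\Gamma}^I\phi(t,x)|\lesssim \langle t\rangle^{-3/2}\sup_{0\le s\le 2t}\sum_{|J|\le 4}\|\Gamma^J\widehat{\Gamma}^I\phi(s)\|_{L^2_x}.$$
The only technical step is to convert $\Gamma^J\widehat{\Gamma}^I\phi$ back to a combination of $\widehat{\Gamma}^K\phi$ in $L^2$. Since each $\widehat{\Gamma}_k-\Gamma_k$ is a constant matrix and every $\Gamma_k$ is a scalar differential operator that commutes with constant matrices, expanding the product shows that $\Gamma^J\widehat{\Gamma}^I\phi$ is a finite linear combination (with constant matrix coefficients) of $\Gamma^L\phi$ with $|L|\le |J|+|I|\le N$. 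Applying (\ref{est:hatGa}) converts these into $\widehat{\Gamma}^L\phi$, which is controlled by $\|(\phi,u)\|_X\le C_1\varepsilon$ via (\ref{norm}). This immediately yields the bound $|\widehat{\Gamma}^I\phi(t,x)|\lesssim C_1\varepsilon\langle t\rangle^{-3/2}$.

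For the Klein-Gordon component the argument is cleaner since no conversion between frames is needed. Applying (\ref{est:GloSob}) to $\Gamma^I u$ gives
$$|\Gamma^I u(t,x)|\lesssim \langle t\rangle^{-3/2}\sup_{0\le s\le 2t}\sum_{|J|\le 4}\|\Gamma^J\Gamma^I u(s)\|_{L^2_x}.$$
Each composition $\Gamma^J\Gamma^I$ expands into a finite combination of $\Gamma^K$ with $|K|\le |J|+|I|\le N$ by the commutator relations among the $\Gamma_k$'s. By the $X$-norm, $\|\Gamma^K u(s)\|_{L^2_x}\lesssim C_1\varepsilon\langle s\rangle^{1/4}$ for every such $K$. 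Taking the supremum over $s\in[0,2t]$ produces a factor $\langle t\rangle^{1/4}$, so
$$|\Gamma^I u(t,x)|\lesssim C_1\varepsilon\langle t\rangle^{-3/2+1/4}=C_1\varepsilon\langle t\rangle^{-5/4},$$
as claimed.

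There is no real obstacle here; the step that requires a little bookkeeping is the frame conversion in the Dirac case, but this is purely algebraic and follows from (\ref{est:hatGa}). The loss of four vector fields in the Klainerman-Sobolev inequality is exactly absorbed by the regularity threshold $N\ge 7$, and the $\langle t\rangle^{1/4}$ growth tolerated in the $X$-norm for the Klein-Gordon field is precisely what produces the $\langle t\rangle^{-5/4}$ rate for $u$.
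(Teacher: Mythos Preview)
Your proof is correct and follows essentially the same approach as the paper: apply the Klainerman-Sobolev inequality (\ref{est:GloSob}), use the frame conversion (\ref{est:hatGa}) for the Dirac piece, and read off the $L^2$ bounds from the $X$-norm, picking up the extra $\langle t\rangle^{1/4}$ for $u$ from the supremum over $s\in[0,2t]$. Your bookkeeping on the $\Gamma^J\widehat{\Gamma}^I$ conversion is in fact slightly more explicit than the paper's own write-up.
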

	\begin{proof}
		According to Klainerman-Sobolev inequality~(\ref{est:GloSob}), (\ref{est:hatGa}) and Definition~\ref{normspace}, for $|I|\leq N-4$, we have
		\begin{equation*}
			\begin{aligned}
				|\widehat{\Gamma}^I\phi(t,x)|&\lesssim\langle t\rangle^{-\frac{3}{2}}\sup_{\substack{0\leq s\leq 2t\\|I|\leq N-4\\|J|\leq 4}}\|\Gamma^J\Gamma^I\phi(s)\|_{L^2_x}\\
				&\lesssim\langle t\rangle^{-\frac{3}{2}}\sup_{\substack{0\leq s\leq 2t\\|L|\leq N}}\|\Gamma^L\phi(s)\|_{L^2_x}\\
				&\lesssim\langle t\rangle^{-\frac{3}{2}}\sup_{\substack{0\leq s\leq 2t\\|L|\leq N}}\|\widehat{\Gamma}^L\phi(s)\|_{L^2_x}
				\lesssim C_1\varepsilon\langle t\rangle^{-\frac{3}{2}},
			\end{aligned}
		\end{equation*}
		where $I,J,L\in\mathbb{N}^{15}$.
		
		Similarly, for $|I|\leq N-4$, we can obtain
		\begin{equation*}
			|\Gamma^Iu(t,x)|\lesssim\langle t\rangle^{-\frac{3}{2}}\sup_{\substack{0\leq s\leq 2t\\|I|\leq N-4\\|J|\leq 4}}\|\Gamma^J\Gamma^Iu(s)\|_{L^2_x}\lesssim C_1\varepsilon\langle t\rangle^{-\frac{5}{4}},
		\end{equation*}
		where $I,J\in\mathbb{N}^{15}$.
	\end{proof}
	
	\begin{lemma}\label{est:nonlinear}
		Let $|I|\leq N$ with $N\geq7$. We have the following $L^2$ and $L^1$ estimates on the nonlinear terms. There hold
		\begin{enumerate}
			\item $\|\widehat{\Gamma}^I(uF\phi)\|_{L^2_x}\lesssim (C_1\varepsilon)^2\langle t\rangle^{-\frac{5}{4}},$
			\item $\|\Gamma^I(\phi^*H\phi)\|_{L^2_x}\lesssim (C_1\varepsilon)^2\langle t\rangle^{-\frac{3}{2}},$
			\item $\|\Gamma^I(\phi^*H\phi)\|_{L^1_x}\lesssim (C_1\varepsilon)^2.$
		\end{enumerate}
	\end{lemma}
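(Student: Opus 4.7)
The plan is to combine the Leibniz-type identities \eqref{est:hatGfPhi} and \eqref{est:hatGG} with the pointwise bounds of Lemma~\ref{highpointdecay} and the $L^2$-bounds built into the $X$-norm of Definition~\ref{normspace}. In each of (i)--(iii) the task reduces to estimating a sum of bilinear terms $|\Gamma^{I_1}u|\,|\widehat{\Gamma}^{I_2}\phi|$ or $|\widehat{\Gamma}^{I_1}\phi|\,|\widehat{\Gamma}^{I_2}\phi|$ over multi-indices with $|I_1|+|I_2|\le|I|\le N$. The key arithmetic observation, valid because $N\ge 7$, is that in any such split at least one of $|I_1|,|I_2|$ is at most $\lfloor N/2\rfloor\le N-4$, so the low-derivative factor can always be placed in $L^\infty_x$ via Lemma~\ref{highpointdecay}, while the other factor is put in $L^2_x$ (or in $L^1_x$, via H\"older, in the last case).

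For (i), I would apply \eqref{est:hatGfPhi} to $uF\phi$ and split into two cases. When $|I_1|\le N-4$, Lemma~\ref{highpointdecay} gives $\|\Gamma^{I_1}u\|_{L^\infty_x}\lesssim C_1\varepsilon\langle t\rangle^{-5/4}$, while the $X$-norm yields $\|\widehat{\Gamma}^{I_2}\phi\|_{L^2_x}\le C_1\varepsilon$, producing $(C_1\varepsilon)^2\langle t\rangle^{-5/4}$. When $|I_2|\le N-4$, Lemma~\ref{highpointdecay} gives $\|\widehat{\Gamma}^{I_2}\phi\|_{L^\infty_x}\lesssim C_1\varepsilon\langle t\rangle^{-3/2}$ while the weighted $X$-norm yields $\|\Gamma^{I_1}u\|_{L^2_x}\le C_1\varepsilon\langle t\rangle^{1/4}$, again with product $(C_1\varepsilon)^2\langle t\rangle^{-5/4}$. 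The identity $-\tfrac{3}{2}+\tfrac{1}{4}=-\tfrac{5}{4}$ is precisely what motivates the weight $\langle t\rangle^{-1/4}$ chosen for $\|\Gamma^I u\|_{L^2_x}$ in the $X$-norm.

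For (ii), I would apply \eqref{est:hatGG} to $\phi^{*}H\phi$; by symmetry assume $|I_1|\le N-4$. Then $\|\widehat{\Gamma}^{I_1}\phi\|_{L^\infty_x}\lesssim C_1\varepsilon\langle t\rangle^{-3/2}$ and $\|\widehat{\Gamma}^{I_2}\phi\|_{L^2_x}\le C_1\varepsilon$ yield the claim. For (iii), after the same Leibniz split, I would simply use H\"older's inequality to put both factors in $L^2_x$, giving $\|\widehat{\Gamma}^{I_1}\phi\|_{L^2_x}\|\widehat{\Gamma}^{I_2}\phi\|_{L^2_x}\le(C_1\varepsilon)^2$ with no time decay required.

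The main thing to verify is the bookkeeping on derivative counts: the threshold $N-4$ in Lemma~\ref{highpointdecay} must accommodate every Leibniz split under $|I_1|+|I_2|\le N$. For $N=7$ this is tight ($\lfloor N/2\rfloor=3=N-4$), which explains the hypothesis $N\ge 7$; for larger $N$ there is a margin. Beyond this index-matching, no analytical obstacle appears — the estimates are direct algebraic consequences of the Klainerman--Sobolev output encoded in Lemma~\ref{highpointdecay} together with the definition of the $X$-norm.
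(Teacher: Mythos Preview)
Your proposal is correct and follows essentially the same approach as the paper: apply the Leibniz-type estimates \eqref{est:hatGfPhi} and \eqref{est:hatGG}, split the sum according to which factor carries at most $N-4$ derivatives, and combine the $L^\infty$ decay from Lemma~\ref{highpointdecay} with the $L^2$ bounds from the $X$-norm (using H\"older with two $L^2$ factors for part~(iii)). Your explicit check that $\lfloor N/2\rfloor\le N-4$ forces $N\ge 7$ and your arithmetic $-\tfrac{3}{2}+\tfrac{1}{4}=-\tfrac{5}{4}$ in part~(i) are exactly the observations underlying the paper's computation.
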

	\begin{proof}
		\textbf{Proof of (i).} Let $|I|\leq N$ with $N\geq7.$ By (\ref{est:hatGfPhi}), Definition~\ref{normspace}, Lemma~\ref{highpointdecay} and H\"{o}lder inequality, we have
		\begin{equation*}
			\begin{aligned}
				&\|\widehat{\Gamma}^I(uF\phi)\|_{L^2_x}\\
				\lesssim&\sum_{\substack{|I_1|+|I_2|\leq N\\|I_1|\leq N,|I_2|\leq N-4}}\|\Gamma^{I_1}u\|_{L^2_x}\|\widehat{\Gamma}^{I_2}\phi\|_{L^\infty_x}+\sum_{\substack{|I_1|+|I_2|\leq N\\|I_1|\leq N-4,|I_2|\leq N}}\|\Gamma^{I_1}u\|_{L^\infty_x}\|\widehat{\Gamma}^{I_2}\phi\|_{L^2_x}\\
				\lesssim&(C_1\varepsilon)^2\langle t\rangle^{-\frac{5}{4}}.
			\end{aligned}
		\end{equation*}
		\textbf{Proof of (ii).} Let $|I|\leq N$ with $N\geq7.$ By (\ref{est:hatGG}), Definition~\ref{normspace}, Lemma~\ref{highpointdecay} and H\"{o}lder inequality, we have
		\begin{equation*}
			\begin{aligned}
				&\|\Gamma^I(\phi^*H\phi)\|_{L^2_x}\\
				\lesssim&\sum_{\substack{|I_1|+|I_2|\leq N\\|I_1|\leq N,|I_2|\leq N-4}}\|\widehat{{\Gamma}}^{I_1}\phi\|_{L^2_x}\|\widehat{\Gamma}^{I_2}\phi\|_{L^\infty_x}+\sum_{\substack{|I_1|+|I_2|\leq N\\|I_1|\leq N-4,|I_2|\leq N}}\|\widehat{{\Gamma}}^{I_1}\phi\|_{L^\infty_x}\|\widehat{\Gamma}^{I_2}\phi\|_{L^2_x}\\
				\lesssim&(C_1\varepsilon)^2\langle t\rangle^{-\frac{3}{2}}.
			\end{aligned}
		\end{equation*}
		\textbf{Proof of (iii).} Similarly to (ii), for $|I|\leq N$, we have
		\begin{equation*}
			\|\Gamma^I(\phi^*H\phi)\|_{L^1_x}
			\lesssim\sum_{\substack{|I_1|+|I_2|\leq N }}\|\widehat{{\Gamma}}^{I_1}\phi\|_{L^2_x}\|\widehat{\Gamma}^{I_2}\phi\|_{L^2_x}
			\lesssim(C_1\varepsilon)^2.
		\end{equation*}
	\end{proof}
	Now let us give some propositions about the function space $X$ and the solution mapping $T$ defined by Definitions~\ref{normspace} and \ref{mappingT}, respectively.
	\begin{proposition}\label{images}
		The image of the map $T$ is a subset of $X$.
	\end{proposition}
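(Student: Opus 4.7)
My plan is to verify the two halves of the norm in (\ref{norm}) separately, reducing each to a linear $L^2$ estimate by commuting the appropriate vector fields through the equations in (\ref{defequ}), and then invoking the nonlinear bounds of Lemma~\ref{est:nonlinear} together with Lemmas~\ref{lem:solL2Dirac} and~\ref{lem:solKG}.

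For the Dirac component, I first commute $\widehat{\Gamma}^I$, $|I|\le N$, through the Dirac equation in (\ref{defequ}). By (\ref{commutators}) and the fact that $\widehat{\Gamma}^I$ commutes with multiplication by the scalar $M$, the function $\widehat{\Gamma}^I \tilde\phi$ satisfies a Dirac equation of the same mass with forcing $\widehat{\Gamma}^I(uF\phi)$. Lemma~\ref{lem:solL2Dirac} then gives
\begin{equation*}
\|\widehat{\Gamma}^I \tilde\phi\|_{L^2_x} \leq \|\widehat{\Gamma}^I \tilde\phi(0,\cdot)\|_{L^2_x} + \int_0^t \|\widehat{\Gamma}^I(uF\phi)(s,\cdot)\|_{L^2_x}\,\d s,
\end{equation*}
and Lemma~\ref{est:nonlinear}(i) bounds the integrand by $(C_1\varepsilon)^2 \langle s\rangle^{-5/4}$, which is integrable uniformly in $t\ge 0$. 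The initial contribution is $\lesssim \varepsilon$ by (\ref{est:smallness}), after using the Dirac equation at $t=0$ to convert the time derivatives hidden in $\widehat{\Gamma}^I \tilde\phi(0,\cdot)$ into weighted spatial derivatives of $(\psi_0, v_0, v_1)$. Hence $\sup_{t\ge 0}\|\widehat{\Gamma}^I \tilde\phi\|_{L^2_x} \lesssim \varepsilon + (C_1\varepsilon)^2$.

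For the Klein-Gordon component, I commute $\Gamma^I$, $|I|\le N$, through the second equation in (\ref{defequ}) using (\ref{commutators}), so that $(-\Box + m^2)(\Gamma^I \tilde u) = \Gamma^I(\phi^* H\phi)$. I split on the mass. If $m=1$, Lemma~\ref{lem:solKG}(ii) combined with Lemma~\ref{est:nonlinear}(ii) immediately yields $\|\Gamma^I \tilde u\|_{L^2_x} \lesssim \varepsilon + (C_1\varepsilon)^2$ uniformly in $t$. If $m\in[0,1)$, I apply Lemma~\ref{lem:solKG}(i) with the specific choice $\delta_1 = 3/4$, and estimate the two pieces of the forcing with Lemma~\ref{est:nonlinear}(ii)--(iii):
\begin{equation*}
\|\Gamma^I \tilde u\|_{L^2_x} \lesssim \varepsilon + \int_0^t \bigl(s^{3/4}(C_1\varepsilon)^2\langle s\rangle^{-3/2} + s^{-3/4}(C_1\varepsilon)^2\bigr)\,\d s \lesssim \varepsilon + (C_1\varepsilon)^2\langle t\rangle^{1/4}.
\end{equation*}
Multiplying by the weight $\langle t\rangle^{-1/4}$ yields $\sup_{t\ge 0}\langle t\rangle^{-1/4}\|\Gamma^I \tilde u\|_{L^2_x} \lesssim \varepsilon + (C_1\varepsilon)^2$, uniformly in both $t\ge 0$ and $m\in[0,1]$.

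Combining the two pieces, $\|(\tilde\phi,\tilde u)\|_X \leq C_2\varepsilon + C_2 (C_1\varepsilon)^2$ for some universal constant $C_2$. Choosing $C_1 \geq 2 C_2$ and then $\varepsilon$ small enough that $C_2 C_1 \varepsilon \leq 1/2$ absorbs the quadratic term and closes $\|(\tilde\phi,\tilde u)\|_X \leq C_1\varepsilon$; since the initial-data condition in Definition~\ref{normspace} holds by construction of $T$, one concludes $T(\phi,u)\in X$. The main obstacle I anticipate is the balancing step in the Klein-Gordon estimate: $\Gamma^I(\phi^* H\phi)$ decays at rate $\langle s\rangle^{-3/2}$ in $L^2$ but is only uniformly bounded in $L^1$, so the free parameter $\delta_1$ in Lemma~\ref{lem:solKG}(i) must be tuned to share the burden between the two integrals. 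The choice $\delta_1 = 3/4$ is exactly what produces the $\langle t\rangle^{1/4}$ growth that the weight $\langle t\rangle^{-1/4}$ in (\ref{norm}) was designed to absorb, which is also why that specific weight appears in the definition of $X$.
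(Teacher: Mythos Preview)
Your proposal is correct and follows essentially the same route as the paper: commute $\widehat{\Gamma}^I$ (resp.\ $\Gamma^I$) through the Dirac (resp.\ Klein--Gordon) equation, apply Lemma~\ref{lem:solL2Dirac} (resp.\ Lemma~\ref{lem:solKG} with the split $m=1$ versus $m\in[0,1)$ and the choice $\delta_1=3/4$), and close using the nonlinear bounds of Lemma~\ref{est:nonlinear}. Your explicit discussion of the constant-choosing step and of how $\delta_1=3/4$ balances the $L^2$ and $L^1$ forcing bounds is slightly more detailed than the paper's presentation, but the argument is the same.
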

	\begin{proof}
		We need to show that for any $(\phi,u)\in X$, $T(\phi,u)=(\tilde{\phi},\tilde{u})\in X.$ 
		First, by the definition of the function space $X$, we have
		\begin{equation*}
			\|(\phi,u)\|_X=\sup_{t\geq0,|I|\leq N}\|\widehat{\Gamma}^I\phi\|_{L^2_x}+\sup_{t\geq0,|I|\leq N}\langle t\rangle^{-\frac{1}{4}}\|\Gamma^Iu\|_{L^2_x}\leq C_1\varepsilon.
		\end{equation*}
		Also, the inequality (\ref{est:smallness}) implies
		\begin{equation}\label{est:initial}
			\|\widehat{{\Gamma}}^I\tilde{\phi}(0,x)\|_{L^2_x}+\|\Gamma^I\tilde{u}(0,x)\|_{L^2_x}+\|\partial_t\Gamma^I\tilde{u}(0,x)\|_{L^1_x}+\|\partial_t\Gamma^I\tilde{u}(0,x)\|_{L^2_x}\lesssim\varepsilon,\quad\mbox{for} \ |I|\leq N.
		\end{equation}
		By Lemma~\ref{lem:solL2Dirac}, Definition~\ref{mappingT}, Lemma~\ref{est:nonlinear} and (\ref{est:initial}), we get
		\begin{equation}\label{est:highphi}
			\begin{aligned}
				\|\widehat{\Gamma}^I\tilde{\phi}\|_{L^2_x}&\lesssim\|\widehat{\Gamma}^I\tilde{\phi}(0,x)\|_{L^2_x}+\int_{0}^{t}\|\widehat{\Gamma}^I(uF\phi)(s,x)\|_{L^2_x}\d s\\
				&\lesssim \varepsilon+\int_{0}^{t}(C_1\varepsilon)^2\langle s\rangle^{-\frac{5}{4}}\d s\\
				&\lesssim\varepsilon+(C_1\varepsilon)^2.
			\end{aligned}
		\end{equation}
		When $m\in[0,1)$, based on Lemma~\ref{lem:solKG} (i) (taking $\delta_1=\frac{3}{4}$), Lemma~\ref{est:nonlinear} and (\ref{est:initial}), we deduce
		\begin{equation}\label{est:highum}
			\begin{aligned}
				\|\Gamma^I\tilde{u}\|_{L^2_x}&\lesssim\|\Gamma^I\tilde{u}(0,x)\|_{L^2_x}+\|\partial_t\Gamma^I\tilde{u}(0,x)\|_{L^1_x}+\|\partial_t\Gamma^I\tilde{u}(0,x)\|_{L^2_x}\\
				&+\int_{0}^{t}(s^{\frac{3}{4}}\|\Gamma^I(\phi^*H\phi)\|_{L^2_x}+s^{-\frac{3}{4}}\|\Gamma^I(\phi^*H\phi)\|_{L^1_x})\d s\\
				&\lesssim\varepsilon+(C_1\varepsilon)^2\int_{0}^{t}(s^{\frac{3}{4}}\langle s\rangle^{-\frac{3}{2}}+s^{-\frac{3}{4}})\d s\\
				&\lesssim\varepsilon+(C_1\varepsilon)^2\langle t\rangle^{\frac{1}{4}}.
			\end{aligned}
		\end{equation}
		When $m=1$, based on Lemma~\ref{lem:solKG} (ii), Lemma~\ref{est:nonlinear} and (\ref{est:initial}), we obtain
		\begin{equation}\label{est:highu}
			\begin{aligned}
				\|\Gamma^I\tilde{u}\|_{L^2_x}&\lesssim\|\Gamma^I\tilde{u}(0,x)\|_{L^2_x}+\|\partial_t\Gamma^I\tilde{u}(0,x)\|_{L^2_x}
				+\int_{0}^{t}\|\Gamma^I(\phi^*H\phi)\|_{L^2_x}\d s\\
				&\lesssim\varepsilon+(C_1\varepsilon)^2\int_{0}^{t}\langle s\rangle^{-\frac{3}{2}}\d s\\
				&\lesssim\varepsilon+(C_1\varepsilon)^2.
			\end{aligned}
		\end{equation}
		Thanks to the above inequalities~(\ref{est:highphi}), (\ref{est:highum}) and (\ref{est:highu}), we can choose $C_1\gg1$ large enough and $\varepsilon\ll1$ sufficiently small such that
		\begin{equation*}
			\|T(\phi,u)\|_X=\|(\tilde{\phi},\tilde{u})\|_X\leq\frac{1}{2}C_1\varepsilon.
		\end{equation*}
		Therefore, by Definitions~\ref{normspace} and \ref{mappingT}, the image of $T$ belongs to $X$.
	\end{proof}
	
	\begin{proposition}\label{contraction}
		The solution mapping $T$ defined by (\ref{defT}) and (\ref{defequ}) is a contraction mapping from $X$ to itself, i.e.,
		\begin{equation*}
			\|(\tilde{\phi}-\tilde{\phi'},\tilde{u}-\tilde{u'})\|_X\leq\frac{1}{2}\|(\phi-\phi',u-u')\|_X,
		\end{equation*}
		where $(\phi,u),(\phi',u')\in X$, and $(\tilde{\phi},\tilde{u})=T(\phi,u), (\tilde{\phi'},\tilde{u'})=T(\phi',u')$.
	\end{proposition}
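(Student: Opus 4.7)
The plan is to mimic the proof of Proposition~\ref{images} applied to the differences. Given $(\phi,u),(\phi',u')\in X$, write $(\tilde\phi,\tilde u)=T(\phi,u)$, $(\tilde{\phi'},\tilde{u'})=T(\phi',u')$, and set $\Phi=\tilde\phi-\tilde{\phi'}$, $U=\tilde u-\tilde{u'}$. By Definition~\ref{mappingT}, the pair $(\Phi,U)$ solves the Cauchy problem with zero initial data and source terms
\begin{equation*}
-i\gamma^\mu\partial_\mu\Phi+M\Phi=(u-u')F\phi+u'F(\phi-\phi'),
\end{equation*}
\begin{equation*}
-\Box U+m^2U=(\phi-\phi')^*H\phi+{\phi'}^*H(\phi-\phi').
\end{equation*}
Since both right-hand sides are \emph{bilinear} in the arguments and one factor in each product is a difference, I expect an estimate of the form (schematically) $C_1\varepsilon\cdot\|(\phi-\phi',u-u')\|_X$ for every nonlinear term, after which time integration and smallness of $\varepsilon$ will close the contraction.

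First I would apply Lemma~\ref{lem:solL2Dirac} to $\Phi$ and Lemma~\ref{lem:solKG} to $U$ (splitting into the $m\in[0,1)$ case with $\delta_1=\tfrac{3}{4}$, and the $m=1$ case), exactly as in the proofs of \eqref{est:highphi}--\eqref{est:highu}. Because the data for $(\Phi,U)$ vanish at $t=0$, the estimates reduce to time integrals of $L^2$ (and for case (i), also $L^1$) norms of the commuted nonlinearities $\widehat\Gamma^I\bigl((u-u')F\phi+u'F(\phi-\phi')\bigr)$ and $\Gamma^I\bigl((\phi-\phi')^*H\phi+{\phi'}^*H(\phi-\phi')\bigr)$ for $|I|\le N$.

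Next I would establish the bilinear estimates, running the same Leibniz expansion used to prove Lemma~\ref{est:nonlinear} via \eqref{est:hatGfPhi}, \eqref{est:hatGG}, but now placing the derivatives on the difference in one H\"older pairing and on the background $(\phi,u,\phi',u')$ in the other. The high-derivative ($|I_j|\le N$) factor is kept in $L^2$, the low-derivative ($|I_j|\le N-4$) factor is placed in $L^\infty$ using Klainerman--Sobolev \eqref{est:GloSob}; note that the low-derivative $L^\infty$ bounds for $\phi,\phi'$ and $u,u'$ come from Lemma~\ref{highpointdecay}, while the low-derivative $L^\infty$ bounds for the differences come from applying Klainerman--Sobolev directly to $\phi-\phi'$ or $u-u'$, costing $\langle t\rangle^{-3/2}$ or $\langle t\rangle^{-5/4}$ respectively, together with a factor $\|(\phi-\phi',u-u')\|_X$ (for $u-u'$, the extra $\langle t\rangle^{1/4}$ weight in the norm is absorbed). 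This yields
\begin{equation*}
\|\widehat\Gamma^I((u-u')F\phi+u'F(\phi-\phi'))\|_{L^2_x}\lesssim C_1\varepsilon\,\langle t\rangle^{-5/4}\|(\phi-\phi',u-u')\|_X,
\end{equation*}
\begin{equation*}
\|\Gamma^I((\phi-\phi')^*H\phi+{\phi'}^*H(\phi-\phi'))\|_{L^2_x}\lesssim C_1\varepsilon\,\langle t\rangle^{-3/2}\|(\phi-\phi',u-u')\|_X,
\end{equation*}
\begin{equation*}
\|\Gamma^I((\phi-\phi')^*H\phi+{\phi'}^*H(\phi-\phi'))\|_{L^1_x}\lesssim C_1\varepsilon\,\|(\phi-\phi',u-u')\|_X,
\end{equation*}
where the last bound uses the $L^2\times L^2\subset L^1$ pairing as in Lemma~\ref{est:nonlinear}(iii). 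Substituting into Lemmas~\ref{lem:solL2Dirac} and~\ref{lem:solKG} and carrying out the same time integrations that produced \eqref{est:highphi}--\eqref{est:highu} yields $\|\widehat\Gamma^I\Phi\|_{L^2_x}\lesssim C_1\varepsilon\|(\phi-\phi',u-u')\|_X$ and $\|\Gamma^I U\|_{L^2_x}\lesssim C_1\varepsilon\langle t\rangle^{1/4}\|(\phi-\phi',u-u')\|_X$, uniformly in $M,m\in[0,1]$.

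Summing over $|I|\le N$ and absorbing the weight $\langle t\rangle^{-1/4}$ into the second norm, we obtain $\|(\Phi,U)\|_X\lesssim C_1\varepsilon\,\|(\phi-\phi',u-u')\|_X$, so choosing $\varepsilon$ small enough (relative to $C_1$ and the implicit constants) delivers the $\tfrac{1}{2}$ factor. The main obstacle, and the reason for the $\langle t\rangle^{-1/4}$ weight in the norm~\eqref{norm}, is the $m\in[0,1)$ Klein--Gordon piece: the $L^1$ integral in \eqref{est:uL2} would blow up as $t\to\infty$ without the $s^{-\delta_1}$ trade-off, and one must verify that the choice $\delta_1=\tfrac{3}{4}$ simultaneously makes both $s^{\delta_1}\langle s\rangle^{-3/2}$ and $s^{-\delta_1}$ integrable up to the weight $\langle t\rangle^{1/4}$; this is the same balance that forced the definition of $X$, and it propagates unchanged to the bilinear difference estimate.
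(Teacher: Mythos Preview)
Your proposal is correct and follows exactly the route the paper itself indicates: the paper's proof of Proposition~\ref{contraction} simply states that it is ``similar to the proof of Proposition~\ref{images}'' and omits the details, so your write-up is in fact a fleshed-out version of what the author had in mind. The decomposition of the source terms as bilinear expressions with one difference factor, the use of Lemmas~\ref{lem:solL2Dirac} and~\ref{lem:solKG} with zero data, the Leibniz splitting via \eqref{est:hatGfPhi}--\eqref{est:hatGG}, the H\"older pairings placing low derivatives in $L^\infty$ through \eqref{est:GloSob}, and the choice $\delta_1=\tfrac34$ all match the structure of Proposition~\ref{images} and Lemma~\ref{est:nonlinear}.
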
		
	\begin{proof}
		The proof of Proposition~\ref{contraction} is similar to the proof of Proposition~\ref{images}. We can shrink the size of the initial data if necessary, so we omit it.
	\end{proof}		
	
	With the above results, we can complete the proof of Theorem~\ref{thm:main}.
	\begin{proof}[\textbf{Proof of Theorem~\ref{thm:main}}]
		According to the Banach fixed point theorem, Propositions~\ref{images} and~\ref{contraction}, we know that the contraction mapping $T$ has a unique fixed point $(\psi,v)\in X$, which is the global-in-time solution to the coupled system (\ref{equ:DKG}). From Lemma~\ref{highpointdecay}, we can obtain pointwise decay estimates for the solution $(\psi,v)$
		\begin{equation}\label{est:psiv}
			|\psi(t,x)|\lesssim \varepsilon\langle t\rangle^{-\frac{3}{2}}, \quad |v(t,x)|\lesssim\varepsilon\langle t\rangle^{-\frac{5}{4}}.
		\end{equation}
		Also, from Lemma~\ref{equ: masswave} (iii), Lemma~\ref{est:nonlinear} and (\ref{est:smallness}), we have
		\begin{equation*}
			\begin{aligned}
				m\|v\|&\lesssim\mathcal{E}_m(0,v)^{\frac{1}{2}}+\int_{0}^{t}\|\psi^*H\psi\|\d s\\
				&\lesssim\|v_1\|+\|\nabla v_0\|+\|v_0\|+(C_1\varepsilon)^2\int_{0}^{t}\langle s\rangle^{-\frac{3}{2}}\d s\\
				&\lesssim\varepsilon+(C_1\varepsilon)^2.
			\end{aligned}
		\end{equation*}
		Thanks to Klainerman-Sobolev inequality~(\ref{est:GloSob}), we deduce
		\begin{equation}\label{est:mv}
			|v(t,x)|\lesssim\varepsilon\langle t\rangle^{-\frac{3}{2}}m^{-1}, \quad\mbox{for}\ m\in(0,1].
		\end{equation}
		By~(\ref{est:psiv}) and~(\ref{est:mv}), we can get unified decay estimates~(\ref{est:thmpoint}).
		This completes the proof of Theorem~\ref{thm:main}. 
	\end{proof}

	\section*{Declarations}
	\textbf{Conflict of interest}\quad
	The author declares that there is no conflict of interest regarding the publication of this paper.


\begin{thebibliography}{99}
		
		
		
		\bibitem{Bache} {Bachelot, A.:}
		Problème de Cauchy global pour des systèmes de Dirac-Klein-Gordon. 
		\emph{Ann. Inst. H. Poincaré Phys. Théor.} \textbf{48}, 387-422 (1988)
		
		
		\bibitem{Bejenaru-Herr} {Bejenaru, I., Herr, S.:}
		On global well-posedness and scattering for the massive Dirac-Klein-Gordon system.
		\emph{J. Eur. Math. Soc. (JEMS)} \textbf{19}, 2445-2467 (2017)
		
		
		\bibitem{Boura} {Bournaveas, N.:}
		Low regularity solutions of the Dirac Klein-Gordon equations in two space dimensions. 
		\emph{Comm. Partial Differential Equations} \textbf{26}, 1345-1366 (2001)
		
		
		\bibitem{Boura00} {Bournaveas, N.:}
		A new proof of global existence for the Dirac Klein-Gordon equations in one space dimension. 
		\emph{J. Funct. Anal.} \textbf{173}, 203-213 (2000)
		
		
		\bibitem{ChGl74} {Chadam, J. M., Glassey, R. T.:} 
		On certain global solutions of the Cauchy problem for the (classical) coupled Klein-Gordon-Dirac equations in one and three space dimensions. 
		\emph{Arch. Rational Mech. Anal.} \textbf{54}, 223-237 (1974)
		
		
		
		\bibitem{Ch-Br81} {Choquet-Bruhat, Y.:} 
		Solutions globales des équations de Maxwell-Dirac-Klein-Gordon (masses nulles). (French) 
		\emph{C. R. Acad. Sci. Paris Sér. I Math.} \textbf{292}, 153-158 (1981)
		
		
		\bibitem{DFS-07} {D'Ancona, P., Foschi, D., Selberg, S.:}
		Null structure and almost optimal local regularity for the Dirac-Klein-Gordon system.
		\emph{J. Eur. Math. Soc. (JEMS)} \textbf{9}, 877-899 (2007)
		
		
		\bibitem{Dong}{Dong, S.:}
		The zero mass problem for Klein-Gordon equations: quadratic null interactions.
		\emph{Forum Math. Sigma} \textbf{10}, e27 (2022)
		
		\bibitem{DLW} {Dong, S., LeFloch, P. G., Wyatt, Z.:} 
		Global evolution of the $U(1)$ Higgs Boson: nonlinear stability and uniform energy bounds.
		\emph{Ann. Henri Poincar\'e} \textbf{22}, 677-713 (2021)
		
		
		\bibitem{Dong-Li22} {Dong, S., Li, K.:}
		Global solution to the cubic Dirac equation in two space dimensions. 
		\emph{J. Differential Equations} \textbf{331}, 192–222 (2022)
		
		\bibitem{DLMY}{Dong, S., Li, K., Ma, Y., Yuan, X.:}
		Global behavior of small data solutions for the 2D Dirac-Klein-Gordon equations,
		preprint arXiv:2205.12000, to appear in Transactions of the American Mathematical Society.
		
		\bibitem{DLY}{Dong, S., Li, K., Yuan, X.:}
		Global solution to the 3D Dirac-Klein-Gordon system with uniform energy bounds.
		\emph{Calc. Var. Partial Differential Equations} \textbf{62}, 146 (2023)
		
		
		\bibitem{DongWyatt} {Dong, S., Wyatt, Z.:}
		Hidden structure and sharp asymptotics for the Dirac-Klein-Gordon system in two space dimensions,
		preprint arXiv:2105.13780, to appear in Annales de l'Institut Henri Poincaré. Analyse Non Linéaire.
		
		
		\bibitem{Geor91}{Georgiev, V.:}
		Small amplitude solutions of the Maxwell-Dirac equations.
		\emph{Indiana Univ. Math. J.} \textbf{40}, 845–883 (1991)
		
		
		\bibitem{GP10} {Gr\"{u}nrock, A., Pecher, H.:}
		Global solutions for the Dirac-Klein-Gordon system in two space dimensions.
		\emph{Comm. Partial Differential Equations} \textbf{35}, 89-112 (2010)
		
		
		\bibitem{Ionescu-P-EKG} {Ionescu, A. D., Pausader, B.:}
		The Einstein-Klein-Gordon coupled system: global stability of the Minkowski solution.
		\emph{Ann. of Math. Stud.}, Vol 213, Princeton University Press, Princeton, NJ (2022)
		
		
		\bibitem{Katayama} {Katayama, S.:}
		Global existence for coupled systems of nonlinear wave and Klein-Gordon equations in three space dimensions. 
		\emph{Math. Z.} \textbf{270}, 487-513 (2012)
		
		\bibitem{Kata-Kubo}{Katayama, S., Kubo, H.:}
		Global existence for quadratically perturbed massless Dirac equations under the null condition. \emph{Fourier analysis}, 253–262. Trends Math. Birkhäuser/Springer, Cham (2014)
		
		
		\bibitem{Klainerman85} {Klainerman, S.:}
		Uniform decay estimates and the Lorentz invariance of the classical wave equation. 
		\emph{Comm. Pure Appl. Math.} \textbf{38}, 321-332 (1985)
		
		
		\bibitem{Klainerman86} {Klainerman, S.:}
		The null condition and global existence to nonlinear wave equations, Nonlinear systems of partial differential equations in applied mathematics, 
		Part 1 (Santa Fe, N.M.), 
		Lectures in Appl. Math., 23, Amer. Math. Soc., Providence, RI 1986, 293-326 (1984)
		
		
		\bibitem{Klainerman-WY}{Klainerman, S., Wang, Q., Yang, S.:}
		Global solution for massive Maxwell-Klein-Gordon equations.
		\emph{Comm. Pure Appl. Math.} \textbf{73}, 63-109 (2020)
		
		
		\bibitem{Sogge}{Sogge, C. D.:}
		{Lectures on nonlinear wave equations.}
		\emph{Monogr. Anal., II}
		International Press, Boston, MA (1995)
		
		
		\bibitem{Tsutsumi} {Tsutsumi, Y.:} 
		Global solutions for the Dirac-Proca equations with small initial data in $3+1$ space time dimensions.
		\emph{J. Math. Anal. Appl.} \textbf{278}, 485-499 (2003)
		
		
		\bibitem{WangX} {Wang, X.:}
		On global existence of 3D charge critical Dirac-Klein-Gordon system.
		\emph{Int. Math. Res. Not. IMRN}, 10801-10846 (2015)
		
		\bibitem{ZhangQ23} {Zhang, Q.:}
		Global existence for the quadratic Dirac equation in two and three space dimensions.
		\emph{J. Differential Equations} \textbf{344}, 696–734 (2023)
		
		\bibitem{ZhangQ} {Zhang, Q.:}
		Global stability of the Dirac-Klein-Gordon system in two and three space dimensions,
		preprint arXiv:2303.08278
		
		
		
	\end{thebibliography}

\end{document}